\numberwithin{equation}{section}
\theoremstyle{plain}
\newtheorem{theorem}{\bf Theorem}[section]
\newtheorem{lemma}[theorem]{\bf Lemma}
\newtheorem{corollary}[theorem]{\bf Corollary}
\newtheorem{proposition}[theorem]{\bf Proposition}
\theoremstyle{definition}
\newtheorem{definition}[theorem]{\bf Definition}
\newtheorem{remark}[theorem]{\bf Remark}
\newtheorem{example}[theorem]{\bf Example}
\newcommand{\bt}{\begin{theorem}}
\newcommand{\et}{\end{theorem}}
\newcommand{\bl}{\begin{lemma}}
\newcommand{\el}{\end{lemma}}
\newcommand{\bd}{\begin{definition}}
\newcommand{\ed}{\end{definition}}
\newcommand{\bc}{\begin{corollary}}
\newcommand{\ec}{\end{corollary}}
\newcommand{\bp}{\begin{proof}}
\newcommand{\ep}{\end{proof}}
\newcommand{\bx}{\begin{example}}
\newcommand{\ex}{\end{example}}
\newcommand{\br}{\begin{remark}}
\newcommand{\er}{\end{remark}}
\newcommand{\be}{\begin{equation}}
\newcommand{\ee}{\end{equation}}
\newcommand{\ba}{\begin{align}}
\newcommand{\ea}{\end{align}}
\newcommand{\bn}{\begin{enumerate}}
\newcommand{\en}{\end{enumerate}}
\newcommand{\bcs}{\begin{cases}}
\newcommand{\ecs}{\end{cases}}
\newcommand{\RNum}[1]{\uppercase\expandafter{\romannumeral #1\relax}}
\renewcommand{\section}{\@startsection{section}{1}{0mm}
  {-\baselineskip}{0.5\baselineskip}{\bf\leftline}}
\begin{document}
\sloppy
\title[ENDOMORPHISM ALGEBRAS OF SILTING COMPLEXES]{Endomorphism algebras of  silting complexes}
\author{Lidia Angeleri H\"{u}gel}
\address{Universit\'a degli Studi di Verona, Strada Le Grazie 15, 37134 Verona, Italia} \par
 \email{lidia.angeleri@univr.it} 
\author{Marcelo Lanzilotta}
  \address{Departamento de Matem\'atica, Facultad de Ingenier\'ia, Universidad de la Rep\'ublica, Uruguay} 
  \email{marclan@fing.edu.uy}
  \author[J. Liu]{Jifen Liu$^{\ast}$}
\address{Institute of Mathematics, School of Mathematical Sciences, Nanjing Normal University, Nanjing 210023, P. R. China.}
\email{liujifen24@163.com}
\author{Sonia Trepode}
  \address{Centro Marplatense de Investigaciones Matem\'aticas. FCEyN, Universidad Nacional de Mar del Plata, CONICET. Dean Funes 3350, Mar del Plata, Argentina.} \par
  \email{strepode@mdp.edu.ar}

\thanks{$\ast$: Corresponding author.}
\keywords{$n$-term silting; $n$-section; endomorphism algebras; ring epimorphisms.}

\begin{abstract}
	We consider endomorphism algebras of $n$-term silting complexes in derived categories of hereditary  algebras, and we show that the module category of such an endomorphism algebra has a separated $n$-section. For $n=3$ we obtain a trisection in the sense of \cite{AACPT}.
\end{abstract}

\maketitle
\section{Introduction}

In representation theory, algebras are studied in terms of their module category.  The category of finite dimensional modules over a  finite dimensional
hereditary algebra is rather well understood, and it  is often taken as a starting point for exploring more complex situations. For example, Happel and Ringel \cite{HR} studied
tilted algebras, the endomorphism algebras of tilting modules over hereditary finite dimensional algebras. Their module categories can be completely described as a ``tilt'' of the module category of the underlying hereditary algebra. Later, Happel, Reiten and Smal\o\ \cite{HRS} extended these results to the class of quasi-tilted algebras, the algebras occurring as endomorphism algebras of tilting objects in hereditary abelian categories with finiteness conditions. 
They characterized quasi-tilted algebras homologically as the algebras of global dimension at most two such that each indecomposable module has projective or injective dimension at most one.
 This led Coelho and Lanzilotta \cite{CL} to investigate shod algebras, which are defined as the algebras satisfying the latter homological condition on indecomposable modules. Shod algebras always have global dimension at most three, the ones of global dimension three are called strictly shod. 

In 2016, Buan and Zhou \cite{BZ} showed that shod algebras admit a very natural characterization in terms of  the notion of a silting complex introduced in \cite{KV}. They proved that the strictly shod algebras are precisely the silted algebras, that is, the endomorphism algebras of 2-term silting complexes in the bounded derived category of a hereditary finite dimensional algebra.

The purpose of this work is to investigate $n$-{\it silted} algebras, i.e.~endomorphism algebras of $n$-term silting complexes in derived categories of hereditary  algebras. 
We will prove that the module category of an $n$-silted algebra has a  separated $n$-section.
To this end, we will employ the fact that the module category of the endomorphism ring of a silting complex is equivalent to the heart of an associated t-structure. We will thus work in the heart of the t-structure induced by our $n$-term silting complex. 
A crucial role will be played by the connection developed in \cite{AH} between silting complexes and chains of homological ring epimorphisms over hereditary algebras. 

The case $n=3$ is particularly nice. We start  with a 3-term silting complex in the derived category of a hereditary algebra, and we prove that its endomorphism algebra has a separated trisection in its module category given by three functorially finite subcategories. Finally, we also prove that every functorially finite $n$-section over a hereditary algebra, under mild conditions, is associated to an $n$-term silting complex.

\section{Preliminaries}

\subsection{Notation}
Throughout this paper, let $A$ be a finite dimensional algebra over a field $k$. We always assume that all modules are right modules. A composition $g\circ f$ of morphisms $f$ and $g$ means first $f$ then $g$. But a composition $\alpha\beta$ of arrows $\alpha$ and $\beta$ means that first $\alpha$ then $\beta$. The category of all right $A$-modules is denoted by $\mathrm{Mod}A$, and the subcategory of finitely presented $A$-modules is denoted by $\mathrm{mod}A$. For a given $A$-module $X$, we denote by $\mathrm{pd}X$ (resp. $\mathrm{id}X$) the projective (resp. injective) dimension of $X$.

Let $\mathcal{M}\subset \mathrm{Mod}A$ be a class of modules. $\mathrm{Add}\mathcal{M}$ (resp. $\mathrm{add}\mathcal{M}$) denotes the class consisting of all modules isomorphic to direct summands of (finite) direct sums of elements in $\mathcal{M}$, while $\mathrm{Gen}\mathcal{M}$ (resp. $\mathrm{gen}\mathcal{M}$) is the class of epimorphic images of (finite) direct sums of elements in $\mathcal{M}$. Dually, we define $\mathrm{Cogen}\mathcal{M}$ (resp. $\mathrm{cogen}\mathcal{M}$) as the class of all submodules of (finite) direct sums of elements in $\mathcal{M}$.

Denote by $\mathrm{ind}A$ the subcategory of $\mathrm{mod}A$ formed by the indecomposable $A$-modules. Given $X,Y\in \mathrm{ind}A$, a path from $X$ to $Y$ in $\mathrm{ind}A$ is a sequence of non-zero morphisms $X=X_{0}\rightarrow X_{1}\rightarrow \cdots \rightarrow X_{t-1}\rightarrow X_{t}=Y \ (t\geq 1)$, where $X_{i}\in \mathrm{ind}A$ for all $i$. We say that $X$ is a predecessor of $Y$ and $Y$ is a successor of $X$.

Let $\mathcal{C}$ be a subcategory of $\mathrm{ind}A$. Recall that $\mathcal{C}$ is closed under predecessors if, whenever there is a path from $X$ to $Y$ in $\mathrm{ind}A$, with $Y\in \mathcal{C}$, then $X\in \mathcal{C}$. An example is the left part $\mathcal{L}_{A}$ of $\mathrm{mod}A$, defined in \cite{HRS}, which is a full subcategory of $\mathrm{ind}A$ with object class 
$$\mathcal{L}_{A}=\left\{Y\in \mathrm{ind}A:\mathrm{pd}X\leq 1 \ \text{whenever there is a path from} \ X \ \text{to} \ Y \right\}.$$
Dually, we define subcategories closed under successors. An example of such categories is the right part $\mathcal{R}_{A}$ of $\mathrm{ind}A$
$$\mathcal{R}_{A}=\left\{X\in \mathrm{ind}A:\mathrm{id}Y\leq 1 \ \text{whenever there is a path from} \ X \ \text{to} \ Y \right\}.$$

\subsection{Functorially finite subcategories} Let $\mathcal{X}$ be a subcategory of an additive category $\mathcal A$.  For an object $M$ in $\mathcal A$, a {\it right $\mathcal{X}$-approximation} of $M$ is a morphism $f:X\longrightarrow M$ with $X\in \mathcal{X}$ such that any morphism $f^{\prime}:X^{\prime}\longrightarrow M$ with $X^{\prime}\in \mathcal{X}$ factors through $f$. If every object in $\mathcal A$ has a right $\mathcal{X}$-approximation, we call $\mathcal{X}$ {\it contravariantly finite} in $\mathcal A$. The notions of a {\it left $\mathcal{X}$-approximation} and a {\it covariantly finite subcategory} are defined dually. We say $\mathcal{X}$ is {\it functorially finite} if it is both contravariantly finite and covariantly finite.

\begin{proposition}\cite{AS}
	Let $(\mathcal{T},\mathcal{F})$ be a torsion pair in $\mathrm{mod}A$. The following are equivalent:

	$(1)$ The torsion class $\mathcal{T}$ is functorially finite;

	$(2)$ There exists $M\in \mathrm{mod}A$ such that $\mathcal{T}=\mathrm{gen}M$;

	$(3)$ The torsion-free class $\mathcal{F}$ is functorially finite;

	$(4)$ There exists $N\in \mathrm{mod}A$ such that $\mathcal{F}=\mathrm{cogen}N$.
\end{proposition}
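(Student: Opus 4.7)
My plan is to establish (1)$\Leftrightarrow$(2) directly for the torsion class $\mathcal{T}$, and then deduce (3)$\Leftrightarrow$(4) for the torsion-free class $\mathcal{F}$ via the standard $k$-duality $D=\mathrm{Hom}_k(-,k)\colon \mathrm{mod}A \to \mathrm{mod}A^{\mathrm{op}}$. Under $D$, the torsion pair $(\mathcal{T},\mathcal{F})$ in $\mathrm{mod}A$ corresponds to the torsion pair $(D\mathcal{F},D\mathcal{T})$ in $\mathrm{mod}A^{\mathrm{op}}$ with the roles of torsion and torsion-free class exchanged; moreover $D$ interchanges $\mathrm{gen}$ with $\mathrm{cogen}$ and preserves functorial finiteness. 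Therefore (3)$\Leftrightarrow$(4) in $\mathrm{mod}A$ translates into (1)$\Leftrightarrow$(2) applied to the dual torsion pair in $\mathrm{mod}A^{\mathrm{op}}$, and the two halves of the proposition reduce to a single claim.

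For (1)$\Rightarrow$(2) I would use covariant finiteness of $\mathcal{T}$: let $\phi_A\colon A\to M$ be a left $\mathcal{T}$-approximation of the regular module $A_A$, so $M\in\mathcal{T}$ and hence $\mathrm{gen}\,M\subseteq \mathcal{T}$ by closure of $\mathcal{T}$ under direct sums and quotients. Conversely, given $T\in\mathcal{T}$, take any surjection $\pi\colon A^n\twoheadrightarrow T$; each component $\pi_i\colon A\to T$ factors as $\pi_i=\rho_i\circ\phi_A$ by the universal property of the approximation, and assembling the $\rho_i$ into $\rho\colon M^n\to T$ yields $\rho\circ\phi_A^{\oplus n}=\pi$. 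Surjectivity of $\pi$ forces $\rho$ to be surjective, so $T\in\mathrm{gen}\,M$.

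For (2)$\Rightarrow$(1), contravariant finiteness follows from the trace construction: setting $\alpha=\dim_k\mathrm{Hom}_A(M,X)$, the evaluation $M^\alpha\to X$ has image $\mathrm{tr}_M(X)\in\mathcal{T}$, and the inclusion $\mathrm{tr}_M(X)\hookrightarrow X$ is a right $\mathcal{T}$-approximation because any morphism from $\mathrm{gen}\,M$ into $X$ has image contained in the trace. Covariant finiteness is the technical core and the main obstacle: taking a $k$-basis $f_1,\dots,f_\beta$ of $\mathrm{Hom}_A(X,M)$ produces $\phi_X=(f_1,\dots,f_\beta)\colon X\to M^\beta$, which is automatically a left $\mathrm{add}\,M$-approximation, but the naive attempt to promote it to a left $\mathcal{T}$-approximation by lifting an arbitrary $g\colon X\to T$ through a surjection $M^n\twoheadrightarrow T$ fails in general, since the required splitting need not exist. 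I expect to close this gap along the lines of Auslander--Smal\o{}, exploiting that a torsion class of the form $\mathrm{gen}\,M$ admits only finitely many indecomposable Ext-projective objects and using their additive closure as the source of the left approximations.
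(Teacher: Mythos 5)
The paper does not prove this proposition; it is quoted verbatim from Auslander--Smal\o{} [AS] and used as a black box, so there is no in-paper argument to compare against. Judged on its own, your plan contains one correct step, one acknowledged gap, and one unacknowledged gap. The implication $(1)\Rightarrow(2)$ via a left $\mathcal{T}$-approximation $\phi_A\colon A\to M$ of the regular module, together with factoring a surjection $A^n\twoheadrightarrow T$ through $\phi_A^{\oplus n}$, is correct, and the trace construction for contravariant finiteness in $(2)\Rightarrow(1)$ is also fine. You are right that covariant finiteness of $\mathrm{gen}\,M$ is the hard part of $(2)\Rightarrow(1)$ and that the naive lifting through a surjection $M^n\twoheadrightarrow T$ does not work; invoking the Ext-projective objects of the torsion class (there are at most $|\mathrm{simple}\,A|$ of them, their additive closure is the source of left $\mathcal{T}$-approximations, and one must verify they all lie in $\mathrm{gen}\,M$) is indeed the Auslander--Smal\o{} route, but as written this step is only announced, not proved.

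The unacknowledged gap is more serious: you reduce $(3)\Leftrightarrow(4)$ to $(1)\Leftrightarrow(2)$ over $A^{\mathrm{op}}$ by duality and then declare the proposition reduced to a single claim, but that only yields the two internal equivalences $(1)\Leftrightarrow(2)$ and $(3)\Leftrightarrow(4)$; it does not connect the two halves. The proposition asserts that all four conditions are equivalent, so you still need a bridge such as $(1)\Leftrightarrow(3)$, i.e.\ that $\mathcal{T}$ is covariantly finite if and only if $\mathcal{F}$ is contravariantly finite (the remaining one-sided finiteness of $\mathcal{T}$ and of $\mathcal{F}$ being automatic over an artin algebra via trace and coreject). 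This bridge is not a formal consequence of duality: $D$ sends the pair $(\mathcal{T},\mathcal{F})$ to a \emph{different} torsion pair $(D\mathcal{F},D\mathcal{T})$ on $A^{\mathrm{op}}$, so knowing $(1)\Leftrightarrow(2)$ for both pairs tells you nothing about how the two pairs relate. In Auslander--Smal\o{} the bridge is proved via the relative AR-theory you already gestured at: from $\mathcal{T}=\mathrm{gen}\,M$ one constructs the finitely many indecomposable Ext-injective objects of $\mathcal{F}$ and shows their direct sum $N$ satisfies $\mathcal{F}=\mathrm{cogen}\,N$, which is $(2)\Rightarrow(4)$. Without some version of this step your argument establishes two disjoint biconditionals, not the four-way equivalence.
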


A torsion pair fulfilling the equivalent properties in the above proposition is a {\it functorially finite} torsion pair.

\subsection{Suspended subcategories and t-structures} Let $\mathcal{T}$ be a triangulated category with shift functor $[1]$. For a full subcategory $\mathcal{V}$ of $\mathcal{T}$ and a subset $I\subseteq \mathbb{Z}$ (which is usually expressed by symbols such as $\geq n, \leq n$, or just $n$), we set
$$\mathcal{V}^{\bot_{I}}=\left\{W\in \mathcal{T} \ \vert \ \mathrm{Hom}_{\mathcal{T}}(V,W[i])=0 \ \text{for any} \ i\in I \ \text{and} \ V\in \mathcal{V} \right\},$$
$$^{\bot_{I}}\mathcal{V}=\left\{W\in \mathcal{T} \ \vert \ \mathrm{Hom}_{\mathcal{T}}(W,V[i])=0 \ \text{for any} \ i\in I \ \text{and} \ V\in \mathcal{V} \right\}.$$
For instance,
$$\mathcal{V}^{\bot_{0}}=\left\{W\in \mathcal{T} \ \vert \ \mathrm{Hom}_{\mathcal{T}}(V,W)=0 \ \text{for any} \ V\in \mathcal{V} \right\},$$
$$^{\bot_{0}}\mathcal{V}=\left\{W\in \mathcal{T} \ \vert \ \mathrm{Hom}_{\mathcal{T}}(W,V)=0 \ \text{for any} \ V\in \mathcal{V} \right\}.$$

A full subcategory $\mathcal{V}$ of $\mathcal{T}$, closed under direct summands, is said to be {\it suspended} if it is closed under positive shifts and extensions, that is, if $X\in \mathcal{V}$, then $X[i]\in \mathcal{V}$ for all integers $i>0$, and if $X\rightarrow Y\rightarrow Z\rightarrow X[1]$ is a triangle in $\mathcal{T}$ with $X,Z\in \mathcal{V}$ then $Y\in \mathcal{V}$. Dually, one can define {\it cosuspended} subcategories.

\begin{definition}\cite{BBD}
	A {\it t-structure} on $\mathcal{T}$ is a pair of full subcategories $(\mathcal{V},\mathcal{W})$ closed under direct summands such that

	$(1)$ $\mathrm{Hom}_{\mathcal{T}}(\mathcal{V},\mathcal{W})=0$, i.e. $\mathrm{Hom}_{\mathcal{T}}(V,W)=0$ for any $V\in \mathcal{V}$ and $W\in \mathcal{W}$;

	$(2)$ $\mathcal{V}[1]\subseteq \mathcal{V}$;

	$(3)$ for any $X$ in $\mathcal{T}$, there exist $V\in \mathcal{V}$, $W\in \mathcal{W}$ and a triangle $V\rightarrow X\rightarrow W\rightarrow V[1]$.
\end{definition}

A suspended subcategory $\mathcal{V}$ of $\mathcal{T}$ is called an {\it aisle} if the inclusion functor $\mathcal{V}\hookrightarrow \mathcal{T}$ has a right adjoint $u:\mathcal{T}\rightarrow \mathcal{V}$. Similarly, a cosuspended subcategory $\mathcal{W}$ of $\mathcal{T}$ is called a {\it coaisle} if the inclusion functor $\mathcal{W}\hookrightarrow \mathcal{T}$ has a left adjoint $v:\mathcal{T}\rightarrow \mathcal{W}$. It is shown in \cite{KV} that the following conditions are equivalent for a suspended subcategory $\mathcal{V}$ of $\mathcal{T}$:

$(i)$ $\mathcal{V}$ is an aisle.

$(ii)$ $(\mathcal{V},\mathcal{V}^{\bot_{0}})$ is a t-structure.

$(iii)$ For any $X$ in $\mathcal{T}$, there is a triangle $V\rightarrow X\rightarrow W\rightarrow V[1]$ with $V\in \mathcal{V}$ and $W\in \mathcal{V}^{\bot_{0}}$.

	$(iv)$ $\mathcal{V}$ is contravariantly finite in $\mathcal{T}$. 

Recall that the {\it heart} of the t-structure $(\mathcal{V},\mathcal{W})$ is the subcategory $\mathcal{H}=\mathcal{V}\cap\mathcal{W}[1]$. It is an abelian category by \cite{BBD}. We denote by $H^{0}:\mathcal{T}\rightarrow \mathcal{H}$ the associated cohomological functor given by $H^{0}(X)=u(v(X)[1])$.

\vskip 10pt

The following Lemma will be useful later.

\begin{lemma}\cite{GM}\label{LM}
	Let $(\mathcal{V},\mathcal{W})$ be a t-structure in $\mathcal{T}$ with heart $\mathcal{H}=\mathcal{V}\cap\mathcal{W}[1]$. Given a morphism $f:X\rightarrow Y$ in $\mathcal{H}$, let $Z$ be the cone of $f$ in $\mathcal{T}$. Consider the canonical triangle
	$$K\rightarrow Z\rightarrow L\rightarrow K[1]$$
	with $K\in \mathcal{V}[1]$ and $L\in \mathcal{W}[1]$. Then $\mathrm{Ker}_{\mathcal{H}}f=K[-1]$ and $\mathrm{Coker}_{\mathcal{H}}f=L$.
\end{lemma}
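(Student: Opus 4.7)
The plan is to establish that $K[-1]$ and $L$ both lie in the heart $\mathcal{H}$, to construct canonical maps $g\colon K[-1]\to X$ and $\ell\colon Y\to L$ from the two triangles, and then to check directly that they satisfy the universal properties of $\mathrm{Ker}_{\mathcal{H}}f$ and $\mathrm{Coker}_{\mathcal{H}}f$. The only tools I will really need are the orthogonality $\mathrm{Hom}_{\mathcal{T}}(\mathcal{V},\mathcal{W})=0$ and the lifting axiom for triangles.

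I would first locate $Z$ in $\mathcal{V}\cap\mathcal{W}[2]$. Since $X,Y\in\mathcal{V}$ and $\mathcal{V}$ is suspended, the cone triangle $X\xrightarrow{f}Y\to Z\to X[1]$ gives $Z\in\mathcal{V}$; since $X,Y\in\mathcal{W}[1]\subseteq\mathcal{W}[2]$ and $\mathcal{W}[2]$ (being a shift of the cosuspended $\mathcal{W}$) is extension-closed, the same triangle yields $Z\in\mathcal{W}[2]$. From the truncation triangle $K\to Z\to L\to K[1]$, rotate to $Z\to L\to K[1]\to Z[1]$: both $Z$ and $K[1]$ lie in $\mathcal{V}$, so $L\in\mathcal{V}$, hence $L\in\mathcal{V}\cap\mathcal{W}[1]=\mathcal{H}$. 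Rotating instead to $L[-1]\to K\to Z\to L$ with $L[-1]\in\mathcal{W}$ and $Z\in\mathcal{W}[2]$ gives $K\in\mathcal{W}[2]$, so $K[-1]\in\mathcal{V}\cap\mathcal{W}[1]=\mathcal{H}$.

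Next, using the rotated cone triangle $Z[-1]\to X\xrightarrow{f}Y\to Z$ and the shifted truncation map $K[-1]\to Z[-1]$, define $g\colon K[-1]\to X$ as the composition; analogously set $\ell\colon Y\to L$ to be the composition $Y\to Z\to L$. Since consecutive arrows in a triangle compose to zero, $fg=0$ and $\ell f=0$.

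For the kernel universal property: given $h\colon W\to X$ in $\mathcal{H}$ with $fh=0$, the triangle $Z[-1]\to X\to Y\to Z$ lifts $h$ to some $\tilde h\colon W\to Z[-1]$. The composition $W\to Z[-1]\to L[-1]$ vanishes, because $W\in\mathcal{V}$ and $L[-1]\in\mathcal{W}$, so by the shifted truncation triangle $K[-1]\to Z[-1]\to L[-1]\to K$ the map $\tilde h$ lifts through $K[-1]$, producing the desired factorization $h=g\circ h'$. For uniqueness, a morphism $\delta\colon W\to K[-1]$ with $g\delta=0$ factors successively: first through $W\to Y[-1]$ (via the triangle ending at $X$), then through $W\to L[-2]$ (via the shifted truncation triangle). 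Both $\mathrm{Hom}_{\mathcal{T}}(W,Y[-1])$ and $\mathrm{Hom}_{\mathcal{T}}(W,L[-2])$ vanish by $\mathrm{Hom}_{\mathcal{T}}(\mathcal{V},\mathcal{W})=0$, since $Y[-1],L[-2]\in\mathcal{W}$, forcing $\delta=0$. The cokernel statement follows by the formally dual argument, where the relevant vanishing is $\mathrm{Hom}_{\mathcal{T}}(\mathcal{V}[1],\mathcal{W}[1])=\mathrm{Hom}_{\mathcal{T}}(\mathcal{V},\mathcal{W})=0$ applied to the analogous chain of factorizations through $K\to Z$ and $X[1]\to Z[1]$. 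The main obstacle is simply tracking shifts carefully so that every auxiliary Hom-group falls squarely inside the t-structure orthogonality; once that bookkeeping is done, the universal properties are automatic.
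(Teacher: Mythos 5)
The paper does not give its own proof of this lemma; it is quoted from Gelfand--Manin \cite{GM} as background. Your argument is correct and is the standard one: locating $Z$ in $\mathcal{V}\cap\mathcal{W}[2]$, deducing from the truncation triangle that $K[-1],L\in\mathcal{H}$, and then verifying the two universal properties purely from the vanishing $\mathrm{Hom}_{\mathcal{T}}(\mathcal{V},\mathcal{W})=0$ after the appropriate rotations. The only stylistic quibble is in the uniqueness step for the kernel: what "factors through $Y[-1]$" is the composite $\kappa\delta$ (with $\kappa\colon K[-1]\to Z[-1]$), whose vanishing then lets $\delta$ itself factor through $L[-2]$; phrased as a two-stage factorization of $\delta$ the reader has to unpack it, but the mathematics is right.
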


\subsection{Silting complexes} Silting complexes were introduced in \cite{KV} to study t-structures in the derived category of a hereditary algebra.

\begin{definition}\cite{PV}
	Let $D(A)=D(\mathrm{Mod}A)$ be the unbounded derived category of $\mathrm{Mod}A$. An object $T$ in $D(A)$ is {\it silting} if the pair $(T^{\bot_{>0}},T^{\bot_{\leq 0}})$ is a t-structure in $D(A)$, which we call the {\it silting t-structure} induced by $T$.

	Two silting objects $T, T^{\prime}$ in $D(A)$ are {\it equivalent} if they induce the same t-structure.
	 
\end{definition}

It is shown in \cite[Proposition 4.2]{AMV} that a bounded complex of finitely generated projective $A$-modules $T$ in $K^{b}(\mathrm{proj}A)$ is a {\it silting object} if and only if it satisfies the following conditions:

$(S1)$ $\mathrm{Hom}_{D(A)}(T,T[i])=0$ for all $i>0$;

$(S2)$ $T$ is a generator of $D(A)$, i.e. $T^{\bot_{\mathbb{Z}}}=0$.

We call $T$ a {\it bounded silting complex}.

An $n$-term silting complex is a bounded silting complex with $n$ non-zero terms, which we always assume to be concentrated in degrees $0,\dots,-n+1$. If $T$ is a $2$-term silting complex in $K^{b}(\mathrm{proj}A)$, then its cohomology in degree zero $H^{0}(T)$ is called a {\it silting module} \cite{AMV}. Over a finite dimensional algebra, the finite dimensional silting modules are precisely the support $\tau$-tilting modules from \cite{AIR}. Every silting module generates a torsion class, called {\it silting class}.

\subsection{Ring epimorphisms}  

\begin{definition}
	A ring homomorphism $\lambda:A\rightarrow B$ is a {\it ring epimorphism} if it is an epimorphism in the category of rings with unit, or equivalently, if the functor given by restriction of scalars $\lambda_{\ast}:\mathrm{Mod}B\rightarrow \mathrm{Mod}A$ is a full embedding.

	A ring epimorphism $\lambda:A\rightarrow B$ is said to be

	$(i)$ {\it homological} if $\mathrm{Tor}^{A}_{i}(B,B)=0$ for all $i>0$, or equivalently, the functor given by restriction of scalars $\lambda_{\ast}:D(B)\rightarrow D(A)$ is a full embedding.

	$(ii)$ {\it pseudoflat} if $\mathrm{Tor}^{A}_{1}(B,B)=0$.

	Two ring epimorphisms $\lambda_{1}:A\rightarrow B_{1}$ and $\lambda_{2}:A\rightarrow B_{2}$ are {\it equivalent} if there is an isomorphism of rings $\mu:B_{1}\rightarrow B_{2}$ such that $\lambda_{2}=\mu\circ \lambda_{1}$. We say that $\lambda_{1}$ and $\lambda_{2}$ lie in the same {\it epiclass} of $A$. 
	
	Epiclasses of a ring $A$ can be classified by suitable subcategories of $\mathrm{Mod}A$. 
\end{definition}

\begin{definition}
	A full subcategory $\mathcal{X}$ of $\mathrm{Mod}A$ is called {\it bireflective} if the inclusion functor $\mathcal{X}\rightarrow \mathrm{Mod}A$ admits both a left and a right adjoint, or equivalently, $\mathcal{X}$ is closed under products, coproducts, kernels and cokernels.
\end{definition}

\begin{theorem}\cite{Gd,BD}\label{Th1}
	The assignment which takes a ring epimorphism $\lambda:A\rightarrow B$ to the essential image $\mathcal{X}_{B}$ of $\lambda_{\ast}$ defines a bijection between:

	$(1)$ epiclasses of ring epimorphisms $A\rightarrow B$,

	$(2)$ bireflective subcategories of $\mathrm{Mod}A$,

	which restricts to a bijection between

	$(1)^{\prime}$ epiclasses of pseudoflat ring epimorphisms $A\rightarrow B$,

	$(2)^{\prime}$ bireflective subcategories closed under extensions in $\mathrm{Mod}A$.
\end{theorem}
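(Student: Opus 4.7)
The plan is to construct mutually inverse assignments between the two classes in each part of the bijection, then check that the pseudoflat condition corresponds exactly to closure under extensions. For a ring epimorphism $\lambda:A\to B$, I would observe that the restriction-of-scalars functor $\lambda_{\ast}:\mathrm{Mod}\,B\to \mathrm{Mod}\,A$ sits inside an adjoint triple with left adjoint $-\otimes_{A}B$ and right adjoint $\mathrm{Hom}_{A}(B,-)$. Since $\lambda$ is a ring epimorphism, $\lambda_{\ast}$ is fully faithful; equivalently, the unit $B\otimes_{A}B\to B$ of the left adjoint and the counit $B\to \mathrm{Hom}_{A}(B,B)$ of the right adjoint are isomorphisms. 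From this one identifies $\mathcal{X}_{B}$ with the class of $M\in \mathrm{Mod}\,A$ such that $M\otimes_{A}B\to M$ (equivalently, $M\to \mathrm{Hom}_{A}(B,M)$) is an isomorphism. It is then routine to check that $\mathcal{X}_{B}$ is closed under products, coproducts, kernels and cokernels, hence bireflective, and that equivalent ring epimorphisms produce the same essential image.

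For the inverse assignment, given a bireflective subcategory $\mathcal{X}\subseteq \mathrm{Mod}\,A$, let $L:\mathrm{Mod}\,A\to \mathcal{X}$ denote the left adjoint to the inclusion and put $B:=L(A)$, with $\lambda:A\to B$ the unit at $A$. The key point is to transfer the ring structure: closure of $\mathcal{X}$ under cokernels produces a canonical isomorphism $B\otimes_{A}B\xrightarrow{\sim} B$, which supplies the multiplication on $B$ and makes $\lambda$ a ring homomorphism. One then verifies $\mathcal{X}_{B}=\mathcal{X}$ by comparing $L$ with $-\otimes_{A}B$, and the two assignments are seen to be mutually inverse by tracing through the adjunctions.

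For the restriction, the bridge is a standard change-of-rings calculation. For any $X,Y\in \mathcal{X}_{B}$, the identification $\lambda_{\ast}(X\otimes_{B}Y)\cong X\otimes_{A}Y$ fails in general only through the term $\mathrm{Tor}_{1}^{A}(B,B)$, and a similar comparison identifies the kernel of the natural map $\mathrm{Ext}_{A}^{1}(X,Y)\to \mathrm{Ext}_{B}^{1}(X,Y)$ in terms of $\mathrm{Tor}_{1}^{A}(B,B)$. Thus $\mathrm{Tor}_{1}^{A}(B,B)=0$ is equivalent to $\mathrm{Ext}_{A}^{1}(X,Y)\cong \mathrm{Ext}_{B}^{1}(X,Y)$ for all $X,Y\in \mathcal{X}_{B}$, and since $\mathcal{X}_{B}\simeq \mathrm{Mod}\,B$ is automatically closed under its internal extensions, this is exactly the statement that $\mathcal{X}_{B}$ is closed under extensions in $\mathrm{Mod}\,A$.

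I expect the main obstacle to be the surjectivity direction of the first bijection: transferring enough of the bireflective structure on $\mathcal{X}$ to endow $B=L(A)$ with a ring structure, and then proving that the unit $\lambda:A\to B$ is both a ring epimorphism and has $\mathcal{X}_{B}=\mathcal{X}$. All the closure hypotheses on $\mathcal{X}$ are used precisely here, through the identification $L(M)\cong M\otimes_{A}B$ and the resulting isomorphism $B\otimes_{A}B\cong B$; once these are in place, the remainder of the argument is a sequence of adjunction and Tor bookkeeping.
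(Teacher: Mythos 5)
The paper does not prove Theorem~\ref{Th1}; it is quoted from the references cited there, so there is no in-paper argument to compare against. Your sketch reconstructs the standard route to this result, and its overall shape is correct, but two points need more care than you give them. For the ring structure on $B := L(A)$: attributing $B\otimes_A B\cong B$ to ``closure under cokernels'' is a shorthand. What is actually used is that $L$, regarded as an endofunctor of $\mathrm{Mod}A$ through the inclusion, preserves all colimits (this is where closure under coproducts and cokernels, or equivalently the existence of a right adjoint to the inclusion, enters); Eilenberg--Watts then gives $L\simeq -\otimes_A L(A)$ once $L(A)$ is endowed with an $A$-bimodule structure by applying $L$ to the left-multiplication maps on $A$, and $B\otimes_A B\simeq L(B)\simeq B$ because $L$ restricts to the identity on $\mathcal{X}$. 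Associativity, unitality, the fact that $\lambda$ is a ring epimorphism, and $\mathcal{X}_B=\mathcal{X}$ all then come out of naturality of this identification, but these are precisely the verifications you call ``tracing through the adjunctions'' and they are where the work sits. For the restriction: your pivot --- that $\mathrm{Tor}^A_1(B,B)=0$ is equivalent to the comparison maps $\mathrm{Ext}^1_B(X,Y)\to\mathrm{Ext}^1_A(X,Y)$ being isomorphisms for all $X,Y\in\mathcal{X}_B$ --- is correct, but the intermediate step via $\lambda_*(X\otimes_B Y)\cong X\otimes_A Y$ is loose and does not by itself carry it. The efficient argument is that those comparison maps are always injective, and extension-closure of $\mathcal{X}_B$ is exactly their surjectivity; tensoring a short exact sequence $0\to Y\to M\to X\to 0$ with $B$ shows this surjectivity is governed by the vanishing of $\mathrm{Tor}^A_1(X,B)$ for $X\in\mathcal{X}_B$, and a $B$-free presentation of $X$ reduces that vanishing to $\mathrm{Tor}^A_1(B,B)=0$. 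With these two details supplied, your outline is a faithful reconstruction of the cited theorem.
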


In particular, if $A$ is a hereditary ring, then $\lambda:A\rightarrow B$ is a homological ring epimorphism if and only if it is pseudoflat, which is equivalent to being a universal localization of $A$ by \cite[Theorem 6.1]{K}. This shows that universal localization provides a powerful tool to construct homological ring epimorphisms for hereditary rings.

\begin{theorem}\cite[Theorem 4.1]{S}
	Let $A$ be a ring and $\Sigma$ be a class of morphisms between finitely generated projective right $A$-modules. Then there is a pseudoflat ring epimorphism $\lambda:A\rightarrow A_{\Sigma}$ called the universal localization of $A$ at $\Sigma$ such that

	$(1)$ $\lambda$ is $\Sigma$-inverting, i.e. if $\sigma$ belongs to $\Sigma$, then $\sigma\otimes_{A}A_{\Sigma}$ is an isomorphism of right $A_{\Sigma}$-modules, and

	$(2)$ $\lambda$ is universal $\Sigma$-inverting, i.e. for any $\Sigma$-inverting morphism $\lambda^{\prime}:A\rightarrow B$ there exists a unique ring homomorphism $g:A_{\Sigma}\rightarrow B$ such that $g\circ \lambda=\lambda^{\prime}$.
\end{theorem}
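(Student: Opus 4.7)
The plan is to construct $A_\Sigma$ explicitly by generators and relations, then to verify the universal property and, crucially, pseudo-flatness.

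For the construction, for each $\sigma:P\to Q$ in $\Sigma$ I would choose idempotents $e_P\in M_m(A)$ and $e_Q\in M_n(A)$ realizing $P=e_PA^m$ and $Q=e_QA^n$, so that $\sigma$ is encoded by a matrix $M_\sigma$ with entries in $A$ together with the appropriate idempotent compatibility. I would then form the free $A$-ring $T$ generated by an adjoined family of symbols assembled into a formal ``inverse'' matrix $X_\sigma$, and set $A_\Sigma=T/I$, where $I$ is the two-sided ideal generated by the entries of the matrix equations $M_\sigma X_\sigma=e_Q$ and $X_\sigma M_\sigma=e_P$ for every $\sigma\in\Sigma$. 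Let $\lambda:A\to A_\Sigma$ be the canonical map. Property (1) is then immediate: by construction $\sigma\otimes_A A_\Sigma$ has a two-sided inverse given by the class of $X_\sigma$. Property (2) is a standard generators-and-relations argument: given a $\Sigma$-inverting homomorphism $\lambda':A\to B$, every $M_\sigma$ is invertible over $B$ with a unique two-sided inverse, so sending each adjoined symbol to the appropriate entry of that inverse extends $\lambda'$ to a ring map $T\to B$ which vanishes on $I$, and therefore factors through a unique $g:A_\Sigma\to B$ with $g\circ\lambda=\lambda'$.

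The main obstacle is pseudo-flatness, $\mathrm{Tor}^A_1(A_\Sigma,A_\Sigma)=0$. For a single morphism $\sigma:P\to Q$, one realizes $A_\sigma$ as a non-commutative pushout and extracts a Mayer--Vietoris type exact sequence of $(A,A_\sigma)$-bimodules connecting $P\otimes_A A_\sigma$, $Q\otimes_A A_\sigma$ and $A_\sigma$. Tensoring on the right with $A_\sigma$ and exploiting that $\sigma\otimes 1$ is already an isomorphism after this base change forces the relevant Tor obstruction to vanish. For a general $\Sigma$, one well-orders the class and iterates the single-morphism argument, using compatibility of $\mathrm{Tor}$ with filtered colimits and transfinite composition. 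The delicate points are the precise formulation of the non-commutative Mayer--Vietoris sequence and the verification that the iterated procedure actually realizes the universal localization rather than a merely $\Sigma$-inverting intermediate.
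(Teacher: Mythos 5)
This result is quoted in the paper as \cite[Theorem 4.1]{S} and is not proved there, so there is no ``paper's proof'' to compare against; your proposal has to be judged on its own terms against the literature (Cohn, Schofield, Malcolmson).

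The generators-and-relations construction of $A_\Sigma$ and the verification of properties $(1)$ and $(2)$ are correct and are indeed exactly the standard argument. The serious content of the statement is pseudoflatness, $\mathrm{Tor}_1^A(A_\Sigma,A_\Sigma)=0$, and here your sketch does not yet constitute a proof. The core of your argument is a ``non-commutative Mayer--Vietoris sequence'' obtained by ``realizing $A_\sigma$ as a non-commutative pushout'' of $P\otimes_A A_\sigma$, $Q\otimes_A A_\sigma$ and $A_\sigma$ --- but you never say which pushout square of rings is meant, nor which long exact sequence it is supposed to yield, nor why the boundary map controlling $\mathrm{Tor}_1$ vanishes. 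Adjoining a universal inverse to a single map $\sigma\colon P\to Q$ is not a pushout of ring maps in any obvious way that produces a Mayer--Vietoris sequence for $\mathrm{Tor}$; and indeed the known proofs of pseudoflatness do not proceed this way. Schofield's proof is essentially combinatorial, resting on Malcolmson's normal form / criterion for equality of ``fractions'' in $A_\Sigma$, which is used to compute the relevant kernel directly; an alternative route goes through the perfect-localization / bireflective-subcategory characterization of pseudoflat epimorphisms. Either way, a specific and non-trivial computation replaces your appeal to an unspecified exact sequence.

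One remark in the other direction: the ``delicate point'' you flag at the end --- that transfinite iteration might produce a merely $\Sigma$-inverting ring rather than the universal one --- is in fact the \emph{easy} part. Sequential localization does give the universal localization by the universal property, pseudoflatness is stable under composition of ring epimorphisms (a change-of-rings spectral sequence argument, using that $C\otimes_A B\cong C$ for ring epis $A\to B\to C$), and $\mathrm{Tor}_1$ commutes with filtered colimits, so the reduction to a single $\sigma$ is sound. The genuine gap is the single-morphism Tor computation itself, which is precisely what you have left unformulated.
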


Moreover, every pseudoflat ring epimorphism $\lambda:A\rightarrow B$ starting in a hereditary ring $A$ induces a silting module $T=B\oplus \mathrm{Coker}\lambda$, see for example \cite{AMV2}.

\section{$n$-term silting complexes over hereditary algebras }

From now on, we assume that $A$ is a hereditary algebra. We want to exploit a result from \cite{AH} stating that bounded silting complexes are closely related to ring epimorphisms.

\subsection{The t-structure induced by a silting complex}

The partial order on bireflective subcategories given by inclusion corresponds, under the bijection in Theorem \ref{Th1}, to a partial order on the epiclasses of $A$ defined by setting $\lambda_{1}\leq \lambda_{2}$ whenever $\lambda_{1}:A\longrightarrow B_{1}$ factors through $\lambda_{2}:A\longrightarrow B_{2}$ via a ring homomorphism $\mu:B_{2}\longrightarrow B_{1}$, that is, $\lambda_{1}=\mu\circ \lambda_{2}$.

\vskip 10pt

It is shown in \cite[Section 5]{AH} that every chain
$$\cdots\leq\lambda_{n-1}\leq\lambda_{n}\leq\lambda_{n+1}\leq \cdots$$
of homological ring epimorphisms $\lambda_{n}:A\longrightarrow B_{n}$ induces a t-structure in $D(A)$. More precisely, consider the corresponding bireflective subcategories $\mathcal{X}_{n}$ of Mod$A$, which are all extension closed, together with the silting classes $$\mathcal{D}_{n}=\mathrm{Gen}(\mathcal{X}_{n})$$ induced by the silting $A$-modules $T_{n}=B_{n}\bigoplus \mathrm{Coker}(\lambda_{n})$. We set $$\mathcal{V}_{n}=\mathcal{D}_{n}\cap \mathcal{X}_{n+1}$$ for all $n\in \mathbb{Z}$. Then there is a t-structure $(\mathcal{V},\mathcal{W})$ in $D(A)$ with aisle
$$\mathcal{V}=\left\{X\in D(A) \ \vert \ H^{-n}(X)\in \mathcal{V}_{n} \ \text{for all} \ n\in \mathbb{Z}\right\}.$$

The following proposition shows that the t-structure is induced by a silting complex under suitable hypotheses.

\begin{proposition}\cite[Proposition 5.15]{AH}\label{Pro}
	Let $A$ be a hereditary algebra and let $\cdots\leq\lambda_{n-1}\leq\lambda_{n}\leq\lambda_{n+1}\leq \cdots$ be a chain of homological ring epimorphisms $\lambda_{n}:A\longrightarrow B_{n}$ with induced ring epimorphisms $\mu_{n}:B_{n+1}\longrightarrow B_{n}$ given by the commutative diagram

	$$\xymatrix{A\ar[rd]_{\lambda_{n+1}} & &B_{n}\\
	 &B_{n+1}\ar[ur]_{\mu_{n}}& 
	 \ar"1,1";"1,3"^{\lambda_{n}}}$$

	 \noindent Let $\mathcal{X}_{n}$ be the corresponding extension closed bireflective subcategories of Mod$A$ and let $\mathcal{K}_{n}=\mathrm{Ker}\mathbf{R}Hom_{A}(B_{n},-)$ be the triangulated subcategories of $D(A)$ associated with $\lambda_{n}$. Then the  t-structure $(\mathcal{V},\mathcal{W})$ in $D(A)$ is induced by a silting object if and only if the conditions
	 $$\bigcap_{n\in \mathbb{Z}}\mathcal{X}_{n}=0 \ and \ \bigcap_{n\in \mathbb{Z}}\mathcal{K}_{n}=0$$
	 hold true. In this case, the t-structure $(\mathcal{V},\mathcal{W})$ is induced by the silting object 
	 $$T=\bigoplus_{n\in \mathbb{Z}}\mathrm{Cone}(\mu_{n})[n].$$

\end{proposition}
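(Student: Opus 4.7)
The plan is to prove the biconditional by explicitly constructing the candidate $T=\bigoplus_{n\in\mathbb{Z}}\mathrm{Cone}(\mu_n)[n]$ and verifying that it is a silting object precisely when the two intersection conditions hold. For the ``if'' direction I would proceed in four stages: (a) locating $T$ in the aisle $\mathcal{V}$; (b) verifying $\mathrm{Hom}_{D(A)}(T,T[i])=0$ for $i>0$; (c) establishing $T^{\bot_{\mathbb{Z}}}=0$; (d) matching the t-structure induced by $T$ with the given $(\mathcal{V},\mathcal{W})$. The converse will then come out almost immediately from the generator property of any silting object inducing $(\mathcal{V},\mathcal{W})$.

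Stage (a) is a direct computation using that $A$ is hereditary. The triangle $B_{n+1}\xrightarrow{\mu_n} B_n\to\mathrm{Cone}(\mu_n)\to B_{n+1}[1]$ gives $H^{0}\mathrm{Cone}(\mu_n)=\mathrm{Coker}\,\mu_n$ and $H^{-1}\mathrm{Cone}(\mu_n)=\mathrm{Ker}\,\mu_n$. The chain order yields inclusions $\mathcal{X}_n\subseteq\mathcal{X}_{n+1}\subseteq\mathcal{X}_{n+2}$; each $\mathcal{X}_k$ is bireflective, hence closed under kernels and cokernels, and contains both $B_n$ and $B_{n+1}$, so $\mathrm{Coker}\,\mu_n\in\mathcal{X}_{n+1}$ and $\mathrm{Ker}\,\mu_n\in\mathcal{X}_{n+1}\subseteq\mathcal{X}_{n+2}$. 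Moreover, $\mathrm{Coker}\,\mu_n$ is an epimorphic image of $B_n\in\mathcal{X}_n$, so it lies in $\mathcal{D}_n=\mathrm{Gen}\,\mathcal{X}_n$, while $\mathrm{Ker}\,\mu_n\in\mathcal{X}_{n+1}\subseteq\mathcal{D}_{n+1}$. Thus $\mathrm{Coker}\,\mu_n\in\mathcal{V}_n$ and $\mathrm{Ker}\,\mu_n\in\mathcal{V}_{n+1}$, placing each summand $\mathrm{Cone}(\mu_n)[n]$, and hence $T$, in $\mathcal{V}$.

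Stages (b) and (c) are where the hypotheses enter substantively. For (b), the essential inputs are that each $\lambda_k$ is a homological ring epimorphism, so that restriction $\lambda_{k*}\colon D(B_k)\to D(A)$ is fully faithful with image cohomologically governed by $\mathcal{X}_k$, together with the hereditary hypothesis on $A$ (higher Ext vanishes). I would decompose $\mathrm{Hom}_{D(A)}(\mathrm{Cone}(\mu_n)[n],\mathrm{Cone}(\mu_m)[m+i])$ via the defining triangles, use the factorisations $\lambda_k=\mu_k\circ\lambda_{k+1}$ to transport each piece into the appropriate $D(B_k)$, and conclude positive-degree vanishing from there. For (c), given $X\in T^{\bot_{\mathbb{Z}}}$, the vanishing $\mathrm{Hom}(B_n,X[i])=0$ for all $n,i$ puts the cohomologies of $X$ into $\bigcap_{n\in\mathbb{Z}}\mathcal{K}_n$ after a cohomological reduction, while the vanishing against the $\mathrm{Coker}\,\mu_n$'s, combined with a truncation argument respecting the cohomological description of $\mathcal{V}$, forces them into $\bigcap_{n\in\mathbb{Z}}\mathcal{X}_n$; both intersections being zero then yields $X=0$.

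For stage (d), the inclusion $\mathcal{V}\subseteq T^{\bot_{>0}}$ is immediate from the Hom computations of (b) applied against arbitrary $X\in\mathcal{V}$, while the reverse follows because the aisle $T^{\bot_{>0}}$ of the silting t-structure is determined by $T$ and coincides with $\mathcal{V}$ on a generating subcategory. Conversely, if any silting object $T'$ induces $(\mathcal{V},\mathcal{W})$, then $(T')^{\bot_{\mathbb{Z}}}=0$, and reading this vanishing through the cohomological description of $\mathcal{V}$ in terms of the $\mathcal{V}_n$'s translates directly into $\bigcap_{n\in\mathbb{Z}}\mathcal{X}_n=0$ and $\bigcap_{n\in\mathbb{Z}}\mathcal{K}_n=0$. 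The chief obstacle is stage (b): tracking Hom groups between shifted cones $\mathrm{Cone}(\mu_n)[n]$ across different pairs $n\neq m$ requires careful bookkeeping of how the factorizations $\lambda_k=\mu_k\circ\lambda_{k+1}$ propagate through the various bireflective images, and one must invoke both hereditarity and the homological nature of every $\lambda_k$ in concert to secure cancellation in every cross term.
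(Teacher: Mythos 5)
The paper does not prove this statement: it is stated with the citation \cite[Proposition 5.15]{AH} and invoked as a black box, so there is no internal proof to compare your sketch against. What I can do is assess your outline on its own terms.

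Stage (a) is correct and matches the standard computation: $\mathrm{Cone}(\mu_n)[n]$ has $H^{-n}=\mathrm{Coker}\,\mu_n$ and $H^{-(n+1)}=\mathrm{Ker}\,\mu_n$; both $B_n$ and $B_{n+1}$ lie in $\mathcal{X}_{n+1}$, which is bireflective, so $\mathrm{Coker}\,\mu_n\in\mathcal{X}_{n+1}\cap\mathrm{Gen}\,\mathcal{X}_n=\mathcal{V}_n$ and $\mathrm{Ker}\,\mu_n\in\mathcal{X}_{n+1}\subseteq\mathcal{V}_{n+1}$. So $T\in\mathcal{V}$.

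Stages (b)--(d), however, remain programmatic, and stage (c) in particular contains a real gap. You write that for $X\in T^{\bot_{\mathbb Z}}$ ``the vanishing $\mathrm{Hom}(B_n,X[i])=0$ for all $n,i$ puts the cohomologies of $X$ into $\bigcap_n\mathcal K_n$,'' but orthogonality to $T$ does not give this vanishing. What the defining triangles $B_{n+1}\to B_n\to\mathrm{Cone}(\mu_n)\to B_{n+1}[1]$ give, after applying $\mathbf{R}\mathrm{Hom}_A(-,X)$, is only that $\mathbf{R}\mathrm{Hom}_A(B_n,X)\cong\mathbf{R}\mathrm{Hom}_A(B_{n+1},X)$, i.e.\ constancy of the tower in $n$, not its vanishing. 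Passing from constancy to vanishing requires an extra input — either a stabilization of the chain ($B_n=0$ for $n\ll 0$, which holds in the finite case of Theorem~\ref{TH} but not in the general bi-infinite statement you are proving) or a genuine interplay with the hypothesis $\bigcap_n\mathcal X_n=0$, which your sketch invokes only vaguely (``a truncation argument'') and does not connect to the constancy. Similarly, in stage (d) the inclusion $\mathcal{V}\subseteq T^{\bot_{>0}}$ is a Hom computation against arbitrary $X\in\mathcal V$, not a corollary of the self-orthogonality in (b), and the reverse inclusion $T^{\bot_{>0}}\subseteq\mathcal V$ is precisely the nontrivial direction; the appeal to ``coinciding on a generating subcategory'' is not an argument. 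As it stands, your proposal is a plausible roadmap but the crux steps — the Hom vanishing in (b), the deduction of $X=0$ in (c), and the reverse inclusion in (d) — are asserted rather than proved, and (c) as written relies on a false implication.
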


Every bounded silting complex over a hereditary algebra $A$ has this form, as explained in the following theorem.

\begin{theorem} \cite[Theorem 6.7]{AH}\label{TH}
	Let $A$ be a finite dimensional hereditary algebra. Every bounded silting complex $T$ in $K^{b}(\mathrm{proj}A)$ arises as in Proposition \ref{Pro} from a finite chain of finite dimensional homological ring epimorphisms $0_{A}\leq\lambda_{n}\leq \cdots \leq\lambda_{m}\leq id_{A}$.
\end{theorem}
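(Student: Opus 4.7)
The plan is to read off the chain directly from the t-structure $(\mathcal{V},\mathcal{W}) = (T^{\bot_{>0}}, T^{\bot_{\leq 0}})$ induced by $T$. For each $n\in\mathbb{Z}$, set $\mathcal{V}_n := \{M\in \mathrm{Mod}\,A : M[n]\in \mathcal{V}\}$. The goal is to produce homological ring epimorphisms $\lambda_n:A\to B_n$ whose associated bireflective subcategories $\mathcal{X}_n$ and silting classes $\mathcal{D}_n = \mathrm{Gen}(\mathcal{X}_n)$ satisfy $\mathcal{V}_n = \mathcal{D}_n\cap \mathcal{X}_{n+1}$, so that Proposition \ref{Pro} applies and recovers (a silting complex equivalent to) $T$.

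First I would analyze the layers $\mathcal{V}_n$. The inclusion $\mathcal{V}[1]\subseteq \mathcal{V}$ gives $\mathcal{V}_n \subseteq \mathcal{V}_{n-1}$, and the t-structure axioms imply that each $\mathcal{V}_n$ is a torsion class in $\mathrm{Mod}\,A$. The aisle-truncation triangle applied to a finitely generated projective $P$ shifted to degree $-n$ (together with the fact that $T$ is a bounded complex of finitely generated projectives) yields an approximation of $P$ by an object of $\mathcal{V}_n$, exhibiting $\mathcal{V}_n$ as the silting class of a silting module. Over the hereditary algebra $A$, the equivalence of pseudoflat and homological ring epimorphisms, combined with Krause's theorem \cite{K} identifying these with universal localizations, assigns to each $\mathcal{V}_n$ a homological ring epimorphism $\lambda_n:A\to B_n$. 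Each $B_n$ is finite dimensional because the corresponding universal localization is defined by the finite set of maps between finitely generated projectives arising from the differentials of $T$.

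Next I would check that the $\lambda_n$ assemble into a chain compatible with the nesting $\mathcal{X}_n \subseteq \mathcal{X}_{n+1}$, i.e.\ $\lambda_n\leq\lambda_{n+1}$, by translating $\mathcal{V}_n\subseteq \mathcal{V}_{n-1}$ through the torsion-pair/bireflective correspondence. Boundedness of $T$ — say concentrated in degrees $-m+1,\ldots,0$ — forces the chain to be finite: for $n$ sufficiently negative the vanishing requirements in $\mathrm{Hom}(T,-[i])$ become vacuous, so $\mathcal{V}_n = \mathrm{Mod}\,A$ and $\lambda_n = \mathrm{id}_A$, while for $n$ sufficiently positive $\mathcal{V}_n = 0$ and $\lambda_n = 0_A$. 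Once the chain is finite, the hypotheses $\bigcap_n\mathcal{X}_n=0$ and $\bigcap_n\mathcal{K}_n=0$ of Proposition \ref{Pro} hold trivially, so the proposition produces a silting object $T' = \bigoplus_n \mathrm{Cone}(\mu_n)[n]$ inducing the given t-structure $(\mathcal{V},\mathcal{W})$; since silting objects are determined up to equivalence by the t-structure they induce, $T$ arises from the chain as claimed.

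The main obstacle I expect is the middle step: recovering the ring epimorphisms from the layers $\mathcal{V}_n$ with the precise compatibility $\mathcal{V}_n = \mathcal{D}_n \cap \mathcal{X}_{n+1}$. It is not enough to identify each $\mathcal{V}_n$ as a silting class; one must also match the bireflective subcategory $\mathcal{X}_{n+1}$ sitting above $\mathcal{D}_n$ so that the formula in Proposition \ref{Pro} reproduces $\mathcal{V}_n$ exactly. The bookkeeping developed in \cite[Section 5]{AH} — in particular how the heart decomposes according to the layers — together with the hereditary hypothesis (which is what lets every homological ring epimorphism be realised as a universal localisation and keeps the $B_n$ finite dimensional) should supply the required compatibility.
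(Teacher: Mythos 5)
Note first that the paper does not prove Theorem~\ref{TH}; it is cited verbatim from \cite[Theorem~6.7]{AH}, so there is no internal proof to measure against. Your outline aims at the right construction --- run Proposition~\ref{Pro} in reverse by reading torsion data off the aisle $\mathcal{V}=T^{\bot_{>0}}$ --- but one of the claims along the way is wrong and the decisive step is missing. A small point first: since aisles are closed under positive shift, $M[n]\in\mathcal{V}$ implies $M[n+1]\in\mathcal{V}$, so the layers satisfy $\mathcal{V}_n\subseteq\mathcal{V}_{n+1}$, not $\mathcal{V}_n\subseteq\mathcal{V}_{n-1}$; you need this corrected to line up with the increasing chains $\mathcal{X}_n\subseteq\mathcal{X}_{n+1}$ and $\mathcal{D}_n\subseteq\mathcal{D}_{n+1}$ of Proposition~\ref{Pro}.

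The substantive problem is the assertion that each $\mathcal{V}_n$ is ``the silting class of a silting module.'' That cannot hold in general. By Proposition~\ref{Pro}, $\mathcal{V}_n=\mathcal{D}_n\cap\mathcal{X}_{n+1}$ where $\mathcal{D}_n=\mathrm{Gen}(\mathcal{V}_n)$ is the silting class and $\mathcal{X}_{n+1}$ is bireflective; the latter is closed under kernels, cokernels and (co)products but \emph{not} under arbitrary quotients, so $\mathcal{V}_n$ is typically a proper, non--quotient-closed subclass of $\mathcal{D}_n$. If $\mathcal{V}_n$ were a torsion class then $\mathcal{V}_n=\mathrm{Gen}(\mathcal{V}_n)=\mathcal{D}_n$ and $\mathcal{D}_n\subseteq\mathcal{X}_{n+1}$ for every $n$, which is the degenerate situation of the top layer only. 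Your truncation-of-$P[n]$ argument produces an object of the aisle, not directly a module in $\mathcal{V}_n$; what one actually extracts for each $n$, after taking cohomology and passing to $\mathrm{Gen}$, is the silting class $\mathcal{D}_n$ and hence (via the silting module/pseudoflat epimorphism correspondence over the hereditary ring) the epimorphism $\lambda_n$ with $\mathcal{X}_n=\alpha(\mathcal{D}_n)$. The real content of \cite[Theorem~6.7]{AH} --- and exactly the step you flag at the end but do not carry out --- is the identity $\mathcal{V}_n=\mathcal{D}_n\cap\mathcal{X}_{n+1}$, i.e.\ that the bireflective subcategory coming from the \emph{next} silting class recovers the $n$-th aisle layer. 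Deferring this to ``the bookkeeping of \cite[Section~5]{AH}'' is circular, since that is where the theorem lives. Finally, finite-dimensionality of the $B_n$ is asserted rather than argued: the universal localization is not simply ``at the differentials of $T$'' but at morphisms attached to the silting module generating $\mathcal{D}_n$, and keeping $B_n$ finite dimensional needs its own lemma.
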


\subsection{The heart of the t-structure}
Let now $T$ be an $n$-term silting complex in $K^{b}(\mathrm{proj}A)$. According to Theorem \ref{TH}, $T$ arises from a chain of finite dimensional homological ring epimorphisms
$$0_{A}\leq \lambda_{0}\leq \lambda_{1}\leq \cdots \leq \lambda_{n-2}\leq id_{A}$$
with $\lambda_{i}:A\longrightarrow B_{i}$ for integers $0\leq i\leq n-2$. Consider the corresponding chains of extension-closed bireflective subcategories
$$0\subseteq \mathcal{X}_{0}\subseteq \mathcal{X}_{1}\subseteq \cdots \subseteq\mathcal{X}_{n-2}\subseteq \mathrm{Mod}A$$
and of silting classes
$$0\subseteq \mathcal{D}_{0}\subseteq \mathcal{D}_{1}\subseteq \cdots \subseteq\mathcal{D}_{n-2}\subseteq \mathrm{Mod}A.$$
We construct a chain
$$0\subseteq \mathcal{V}_{0}\subseteq \mathcal{V}_{1}\subseteq \cdots \subseteq\mathcal{V}_{n-2}\subseteq \mathrm{Mod}A$$
with 
$$\mathcal{V}_{i}=\mathcal{D}_{i}\cap\mathcal{X}_{i+1}=\mathrm{Gen}(\mathcal{X}_{i})\cap \mathcal{X}_{i+1}=\mathrm{Gen}B_{i}\cap \mathrm{Mod}B_{i+1} \ \text{for} \ i=0,1,\dots,n-3$$
and
$$\mathcal{V}_{n-2}=\mathcal{D}_{n-2}\cap \mathcal{X}_{n-1}=\mathcal{D}_{n-2}\cap \mathrm{Mod}A=\mathrm{Gen}(\mathcal{X}_{n-2})=\mathrm{Gen}B_{n-2}.$$
These classes induce a t-structure $(\mathcal{V},\mathcal{W})$ in $D(A)$, where the aisle
$$\mathcal{V}=\left\{X\in D(A) \ \vert \ H^{-j}(X)\in \mathcal{V}_{j} \ \text{for all} \ j\in \mathbb{Z}\right\}$$
consists of the complexes $X$ with cohomologies concentrated in degrees $\leq 0$ satisfying $H^{-i}(X)\in \mathrm{Gen}B_{i}\cap \mathrm{Mod}B_{i+1} \ \text{for} \ i=0,1,\dots,n-3$ and $H^{-(n-2)}(X)\in \mathrm{Gen}B_{n-2}$.

Now we compute the coaisle $\mathcal{W}$. Since $A$ is hereditary, we know from \cite[Section 3.3]{AH} that $\mathcal{W}$ is determined by its cohomologies, that is,
$$\mathcal{W}=\left\{X\in D(A) \ \vert \ H^{j}(X)\in \mathcal{W}_{j} \ \text{for all} \ j\in \mathbb{Z}\right\}$$
where $\mathcal{W}_{j}=H^{j}(\mathcal{W})$ satisfies $\mathcal{W}_{j}={\mathcal{V}_{-j}}^{\bot_{0}}\cap {\mathcal{V}_{-(j+1)}}^{\bot_{1}}$.

\noindent Hence we have a chain 
$$0\subseteq \mathcal{W}_{-(n-2)}\subseteq\cdots \subseteq \mathcal{W}_{-1}\subseteq \mathcal{W}_{0}\subseteq \mathrm{Mod}A$$
with 
$$\mathcal{W}_{-j}={\mathcal{V}_{j}}^{\bot_{0}}\cap {\mathcal{V}_{j-1}}^{\bot_{1}} \ \text{for} \ j=1,2,\dots,n-2$$
and
$$\mathcal{W}_{0}={\mathcal{V}_{0}}^{\bot_{0}}\cap {\mathcal{V}_{-1}}^{\bot_{1}}={\mathcal{V}_{0}}^{\bot_{0}}\cap \mathrm{Mod}A={\mathcal{V}_{0}}^{\bot_{0}}.$$

We compute the heart of the t-structure $(\mathcal{V},\mathcal{W})$. Since $\mathcal{W}[1]$ consists of the complexes $X$ with $H^{j}(X)\in \mathcal{W}_{j+1}$ for all $j\in \mathbb{Z}$, we have
$$\mathcal{H}=\mathcal{V}\cap \mathcal{W}[1]=\left\{X\in D(A) \ \vert \ H^{j}(X)\in \mathcal{V}_{-j}\cap\mathcal{W}_{j+1} \ \text{for all} \ j\in \mathbb{Z} \right\}.$$
Hence a complex $X$ lies in $\mathcal{H}$ if and only if 
$$H^{0}(X)\in \mathcal{V}_{0}\cap\mathcal{W}_{1}=\mathcal{V}_{0}\cap \mathrm{Mod}A=\mathcal{V}_{0},$$
$$H^{-j}(X)\in \mathcal{V}_{j}\cap\mathcal{W}_{-(j-1)}\ \text{for} \ j=1,2,\dots,n-2,$$
$$H^{-(n-1)}(X)\in \mathcal{V}_{n-1}\cap \mathcal{W}_{-(n-2)}=\mathrm{Mod}A \cap \mathcal{W}_{-(n-2)}=\mathcal{W}_{-(n-2)},$$
$$H^{j}(X)=0 \ \text{for all} \ j\in \mathbb{Z}\setminus \left\{{0,-1,-2, \dots, -(n-1)}\right\}.$$

The objects in $\mathcal{H}$, being directs sums of their cohomologies, decompose as direct sums of stalk complexes in $\mathcal{V}_{0}[0], (\mathcal{V}_{j}\cap\mathcal{W}_{-(j-1)})[j]$ for $j=1,2,\dots,n-2$ and $\mathcal{W}_{-(n-2)}[n-1]$. We will denote these classes by $\mathcal{H}_{j}$, $0\le j<n$. We have the following result.

\begin{proposition}
	Let $A$ be a finite dimensional hereditary algebra, and let $T$ be an $n$-term silting complex in $K^{b}(\mathrm{proj}A)$. Then the heart of the t-structure $(\mathcal{V},\mathcal{W})$ in $D(A)$ induced by $T$ has a decomposition $\mathcal{H}=\mathcal{H}_{0}\vee \mathcal{H}_{1}\vee\dots\vee \mathcal{H}_{n-2}\vee \mathcal{H}_{n-1}$.
\end{proposition}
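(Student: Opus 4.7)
The plan is to reduce the statement to the classical fact that bounded complexes over a hereditary algebra are formal. Since $A$ is hereditary, $\mathrm{Ext}_A^i(M,N)=0$ for all $i\geq 2$, hence for any $X\in D(A)$ with bounded cohomology the Hom spaces $\mathrm{Hom}_{D(A)}(H^i(X)[-i],H^j(X)[-j])\cong\mathrm{Ext}_A^{i-j}(H^i(X),H^j(X))$ vanish whenever $i-j\geq 2$. An induction on the cohomological length, using the canonical truncation triangles $\tau_{\leq k}X\to X\to \tau_{>k}X\to \tau_{\leq k}X[1]$, then shows that each such triangle splits, so $X\cong\bigoplus_{i\in\mathbb{Z}}H^i(X)[-i]$. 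This is the key input.

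Next, I would apply the cohomological description of $\mathcal{H}$ assembled in the paragraphs preceding the proposition: $X\in\mathcal{H}$ if and only if $H^{j}(X)=0$ for $j\notin\{0,-1,\ldots,-(n-1)\}$ and the nonzero cohomologies satisfy $H^{0}(X)\in\mathcal{V}_{0}$, $H^{-j}(X)\in\mathcal{V}_{j}\cap\mathcal{W}_{-(j-1)}$ for $1\leq j\leq n-2$, and $H^{-(n-1)}(X)\in\mathcal{W}_{-(n-2)}$. By the previous step, such an $X$ is isomorphic to $\bigoplus_{j=0}^{n-1}H^{-j}(X)[j]$, and each summand $H^{-j}(X)[j]$ is a stalk complex whose only nonzero cohomology lies in degree $-j$ and satisfies exactly the condition required, so it belongs to $\mathcal{H}_{j}$ by definition.

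Conversely, every finite direct sum of objects chosen from $\mathcal{H}_{0},\mathcal{H}_{1},\ldots,\mathcal{H}_{n-1}$ is a sum of stalk complexes in degrees $0,-1,\ldots,-(n-1)$ whose cohomologies meet all the requirements, hence lies in $\mathcal{H}$. Combining the two directions yields the claimed decomposition $\mathcal{H}=\mathcal{H}_{0}\vee\mathcal{H}_{1}\vee\cdots\vee\mathcal{H}_{n-1}$. The only step that really deserves care is the hereditary formality invoked in the first paragraph; the rest is bookkeeping with the characterisation of $\mathcal{H}$ already established in the text, so I do not anticipate any serious obstacle.
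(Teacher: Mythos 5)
Your proposal is correct and follows essentially the same route as the paper: the paper treats the formality of bounded complexes over a hereditary algebra (every object splits as the direct sum of its shifted cohomologies) as a standard fact and deduces the decomposition immediately from the cohomological description of $\mathcal{H}$ established just above the proposition. You supply the proof of that formality fact via truncation triangles and the vanishing of $\mathrm{Ext}^{\geq 2}_A$, and then carry out the same bookkeeping in both directions. The only cosmetic remark is that the inductive splitting is cleanest if at each step you truncate so that $\tau_{>k}X$ is the top stalk complex $H^{m}(X)[-m]$, but your sketch is sound as stated.
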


\subsection{The $n$-section of the heart}

We introduce the definition of an $n$-section in $\mathrm{ind}A$.

\begin{definition}
	Let $A$ be an artin algebra. An {\it $n$-section} $(\mathcal{A}_{1},\cdots,\mathcal{A}_{n})$ of $\mathrm{ind}A$ is an $n$-tuple of non-empty disjoint full subcategories of $\mathrm{ind}A$ such that

	$(1)$ $\mathrm{ind}A=\mathcal{A}_{1}\cup\cdots\cup \mathcal{A}_{n}$;

	$(2)$ $\mathrm{Hom}_{A}(\mathcal{A}_{i},\mathcal{A}_{j})=0$ for $1\leq j<i\leq n$.
	
\end{definition}
In particular, for $n=2$ we obtain a split torsion pair, and for $n=3$ a  trisection in the sense of \cite{AACPT,HRS}.

\vskip 10pt

Motivated by \cite[p.120]{R}, we say that an $n$-section $(\mathcal{A}_{1},\cdots,\mathcal{A}_{n})$ is {\it separated} if, for any three adjacent subcategories $(\mathcal{A}_{i-1},\mathcal{A}_{i},\mathcal{A}_{i+1})\  (2\leq i\leq n-1)$ of $\mathrm{ind}A$, any morphism $A_{i-1}\longrightarrow A_{i+1}$ with $A_{i-1}\in \mathcal{A}_{i-1}$ and $A_{i+1}\in \mathcal{A}_{i+1}$ factors through $\mathrm{add}\mathcal{A}_{i}$. Moreover, an $n$-section $(\mathcal{A}_{1},\cdots,\mathcal{A}_{n})$ is called {\it functorially finite} if every $\mathrm{add}\mathcal{A}_{i}$ is a functorially finite subcategory of $\mathrm{mod}A$.

\vskip 10pt

Let $A$ be a finite dimensional hereditary algebra, and let $T$ be an $n$-term silting complex in $K^{b}(\mathrm{proj}A)$ with endomorphism ring $B=\mathrm{End}_{D(A)}T$. Following \cite{KD,PV}, the heart of the t-structure $(\mathcal{V},\mathcal{W})$ induced by $T$ is equivalent to the module category $\mathrm{Mod}B$ via the functor $F=\mathrm{Hom}_{D(A)}(T,-)\arrowvert_{\mathcal{H}}:\mathcal{H}\longrightarrow \mathrm{Mod}B$. Moreover, the t-structure $(\mathcal{V},\mathcal{W})$ restricts to a bounded t-structure $(\mathcal{V}^{\prime},\mathcal{W}^{\prime})$ in $D^{b}(\mathrm{mod}A)$ and $F$ restricts to an equivalence between the heart $\mathcal{H}^{\prime}=\mathcal{H}\cap D^{b}(\mathrm{mod}A)$ and $\mathrm{mod}B$. Put 
$$\mathcal{B}_{0}=F((\mathcal{V}_{0}\cap \mathrm{mod}A)[0]), \ \mathcal{B}_{1}=F((\mathcal{V}_{1}\cap\mathcal{W}_{0}\cap \mathrm{mod}A)[1]), \dots,$$ $$\mathcal{B}_{n-2}=F((\mathcal{V}_{n-2}\cap\mathcal{W}_{-(n-3)}\cap \mathrm{mod}A)[n-2]),  \mathcal{B}_{n-1}=F((\mathcal{W}_{-(n-2)}\cap \mathrm{mod}A)[n-1]).$$
Then $\mathcal{H}^{\prime}$ is a Krull-Schmidt $k$-category, even a length category, and
$$\mathrm{ind}B=\mathrm{ind}\mathcal{B}_{0}\cup \mathrm{ind}\mathcal{B}_{1}\cup \cdots \cup \mathrm{ind}\mathcal{B}_{n-1}$$

\begin{proposition}\label{P1}
	$(1)$ $(\mathrm{ind}\mathcal{B}_{0}, \mathrm{ind}\mathcal{B}_{1}, \dots, \mathrm{ind}\mathcal{B}_{n-1})$ is a separated $n$-section of $\mathrm{ind}B$ with $\mathrm{ind}\mathcal{B}_{0}$ closed under predecessors and $\mathrm{ind}\mathcal{B}_{n-1}$ closed under successors.

	$(2)$ $\mathrm{id}X_{B}\leq 1$ for any object $X$ in $\mathcal{B}_{n-1}$.

	$(3)$ $\mathrm{pd}X_{B}\leq 1$ for any object $X$ in $\mathcal{B}_{0}$. 
\end{proposition}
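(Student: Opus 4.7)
The plan is to combine three ingredients: the equivalence $F:\mathcal{H}^{\prime}\to\mathrm{mod}B$, Lemma \ref{LM} for computing kernels and cokernels in $\mathcal{H}$, and the hereditariness of $A$, which forces $\mathrm{Ext}^{k}_{A}$ to vanish for $k\geq 2$.

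For part (1), I would exploit the stalk decomposition of objects of $\mathcal{H}^{\prime}$. Since $\mathcal{H}^{\prime}$ sits as a full subcategory of $D(A)$, the Hom-space between stalks $M[i]\in\mathcal{H}_{i}^{\prime}$ and $N[j]\in\mathcal{H}_{j}^{\prime}$ equals
\[
\mathrm{Hom}_{D(A)}(M[i],N[j])=\mathrm{Ext}_{A}^{j-i}(M,N),
\]
which vanishes unless $j-i\in\{0,1\}$. Transporting this along $F$ yields three consequences: the $n$-section vanishing $\mathrm{Hom}_{B}(\mathcal{B}_{i},\mathcal{B}_{j})=0$ for $j<i$; the separated condition, because morphisms $\mathcal{B}_{i-1}\to\mathcal{B}_{i+1}$ correspond to $\mathrm{Ext}^{2}_{A}=0$ and trivially factor through $\mathrm{add}(\mathcal{B}_{i})$; and the closure statements, since along any path of nonzero morphisms in $\mathrm{ind}B$ the index of the containing class can rise by at most one per step, forcing any path ending in $\mathcal{B}_{0}$ to stay in $\mathcal{B}_{0}$ and any path starting in $\mathcal{B}_{n-1}$ to stay in $\mathcal{B}_{n-1}$.

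Parts (2) and (3) are dual; I sketch (3). For $X\in\mathcal{B}_{0}$ write $X=F(M)$ with $M\in\mathcal{V}_{0}\cap\mathrm{mod}A$. By the Hom computation above, only the stalk $\mathcal{H}_{0}$-part $T^{(0)}$ of $T$ maps nontrivially to $M$, so a projective cover of $X$ in $\mathrm{mod}B$ corresponds to an $\mathcal{H}$-epimorphism $\pi:P^{\prime}\twoheadrightarrow M$ with $P^{\prime}\in\mathrm{add}(T)\cap\mathcal{H}_{0}$. The cone $Z=\mathrm{Cone}(\pi)$ in $D(A)$ has cohomology concentrated in degrees $-1$ and $0$; by Lemma \ref{LM}, together with the vanishing of the $\mathcal{H}$-cokernel, one has $H^{0}(Z)\in\mathcal{V}_{-1}=0$ and $H^{-1}(Z)\in\mathcal{V}_{0}$. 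Hence the $\mathcal{H}$-kernel $K$ of $\pi$ is a stalk $\ker_{A}(\pi)[0]\in\mathcal{H}_{0}$ with $\ker_{A}(\pi)\in\mathcal{V}_{0}\cap\mathrm{mod}A$. To finish I must show $F(K)$ is a projective $B$-module, i.e., $K\in\mathrm{add}(T^{(0)})$. For this I would combine the predecessor-closure of $\mathcal{B}_{0}$ (part (1)) with the explicit description of $T$ from Proposition \ref{Pro} as a direct sum of shifted cones of the structural maps $\mu_{i}$; the expectation is that $T^{(0)}$ acts as a projective generator of $\mathcal{V}_{0}\cap\mathrm{mod}A$. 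Part (2) is dual: it invokes successor-closure of $\mathcal{B}_{n-1}$ and an injective copresentation inside $\mathcal{H}_{n-1}$.

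The main obstacle is exactly this last step. The bare Hom-vanishing used in part (1) is not sufficient, because the natural map $\mathrm{Ext}^{2}_{\mathcal{H}^{\prime}}\to\mathrm{Hom}_{D(A)}(-,-[2])$ need not be injective for a general silting heart. To close the gap I expect to leverage the concrete description of $T$ as a sum of cones of the $\mu_{i}$, combined with the fact that each $B_{i}$ is a universal localization of the hereditary algebra $A$ (Theorem \ref{Th1}) and is therefore itself hereditary; this should let me iterate the projective cover argument and show that it terminates after one step.
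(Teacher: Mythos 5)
Your treatment of part $(1)$ is sound and matches the paper's: since $A$ is hereditary, $\mathrm{Hom}_{D(A)}(M[i],N[j])=\mathrm{Ext}^{j-i}_A(M,N)$ vanishes unless $j-i\in\{0,1\}$, and because objects of $\mathcal{H}^{\prime}$ decompose into stalk complexes this immediately gives the ordering, the closure conditions at the two ends, and a trivially satisfied separation condition (there are no morphisms $\mathcal{B}_{i-1}\to\mathcal{B}_{i+1}$ at all).

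For parts $(2)$ and $(3)$ you correctly flag a gap, and your opening moves are right: the projective cover $\widetilde{P}=P^{\prime}[0]\twoheadrightarrow M$ in $\mathcal{H}$ lies in $\mathcal{H}_{0}$, and Lemma~\ref{LM} shows the $\mathcal{H}$-kernel is a stalk $K[0]$ with $K\in\mathcal{V}_{0}\cap\mathrm{mod}A$. But the route you propose to finish is a dead end: $\mathcal{V}_{0}\cap\mathrm{mod}A$ is not an abelian category, so ``$T^{(0)}$ is a projective generator of it'' has no intrinsic meaning, and the hereditariness of the localized rings $B_{i}$ is never used. What does the work is the hereditariness of $A$ itself, via an Ext-vanishing argument that the paper carries out (for $(2)$, with $(3)$ dual). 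Since $\widetilde{P}$ is projective in $\mathcal{H}$, the degree-one Ext comparison for hearts gives $\mathrm{Ext}^{1}_{A}(P^{\prime},V)\cong\mathrm{Hom}_{D(A)}(P^{\prime}[0],V[1])\cong\mathrm{Ext}^{1}_{\mathcal{H}}(\widetilde{P},V[0])=0$ for every $V\in\mathcal{V}_{0}$, so $P^{\prime}\in{}^{\bot_{1}}\mathcal{V}_{0}$; because $A$ is hereditary, ${}^{\bot_{1}}\mathcal{V}_{0}$ is closed under submodules, hence $K\in\mathcal{V}_{0}\cap{}^{\bot_{1}}\mathcal{V}_{0}$. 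Repeating this with the projective cover of $K[0]$ produces a short exact sequence $0\to K^{\prime}\to P^{\prime\prime}\to K\to 0$ in $\mathcal{H}$ with $K^{\prime}\in\mathcal{V}_{0}$ and $K\in{}^{\bot_{1}}\mathcal{V}_{0}$, whence $\mathrm{Ext}^{1}_{\mathcal{H}}(K[0],K^{\prime}[0])\cong\mathrm{Ext}^{1}_{A}(K,K^{\prime})=0$ and the sequence splits, i.e.\ $K[0]$ is projective. Part $(2)$ is the exact dual, using injective envelopes, the class $\mathcal{W}_{-(n-2)}\cap(\mathcal{W}_{-(n-2)})^{\bot_{1}}$, and closure under quotients. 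Note that this explicit two-term resolution completely sidesteps the comparison map $\mathrm{Ext}^{2}_{\mathcal{H}^{\prime}}\to\mathrm{Hom}_{D(A)}(-,-[2])$ whose possible failure of injectivity you worry about; the paper never reasons through degree-two Ext at all.
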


\begin{proof}
	$(1)$ We have seen that $\mathrm{ind}B=\mathrm{ind}\mathcal{B}_{0}\cup \mathrm{ind}\mathcal{B}_{1}\cup \cdots \cup \mathrm{ind}\mathcal{B}_{n-1}$, and by construction, there are only morphisms from the left to the right. It is obvious that $\mathrm{ind}\mathcal{B}_{0}$ is closed under predecessors and $\mathrm{ind}\mathcal{B}_{n-1}$ is closed under successors. On the other hand, $\mathrm{Hom}_{D(A)}(X,Y[2])=0$ for any $X,Y\in \mathrm{ind}A$, thus there are no morphisms from $\mathrm{ind}\mathcal{B}_{k}$ to $\mathrm{ind}\mathcal{B}_{k+2}$ for $k=0,1, \dots, n-3$.

	$(2)$ Every object $X$ in $\mathcal{B}_{n-1}$ is of the form $X=F(W[n-1])$ for a module $W\in \mathcal{W}_{-(n-2)}$. Consider the injective envelope $\widetilde{e}:W[n-1]\rightarrow \widetilde{E}$ in $\mathcal{H}$. We have to show that $\widetilde{C}=\mathrm{Coker}_{\mathcal{H}} \widetilde{e}$ is an injective object in $\mathcal{H}$.

	Since $\mathcal{B}_{n-1}$ is closed under successors, $F(\widetilde{E})$ and $F(\widetilde{C})$ are in $\mathcal{B}_{n-1}$, hence $\widetilde{E}=E[n-1]$ and $\widetilde{C}=C[n-1]$ with $E,C\in \mathcal{W}_{-(n-2)}$ and $\widetilde{e}=e[n-1]$ for a morphism $e:W\rightarrow E$ in $\mathrm{Mod}A$. Moreover, notice that $\mathrm{Ext}^{1}_{A}(\mathcal{W}_{-(n-2)},E)\cong \mathrm{Hom}_{D^{b}(A)}(\mathcal{W}_{-(n-2)},E[1])\cong \mathrm{Ext}^{1}_{\mathcal{H}}(\mathcal{W}_{-(n-2)}[n-1],\widetilde{E})=0$ since $\widetilde{E}$ is injective in $\mathcal{H}$, hence $E\in (\mathcal{W}_{-(n-2)})^{\bot_{1}}$. Now we consider $\widetilde{e}=e[n-1]:W[n-1]\rightarrow \widetilde{E}$ in $D(A)$. Its cone is $Z:=\mathrm{Cone}(\widetilde{e})=\mathrm{Ker}(e)[n]\oplus \mathrm{Coker}(e)[n-1]$. On the other hand, we know that $\mathrm{Ker}_{\mathcal{H}}(\widetilde{e})=0$ and $\mathrm{Coker}_{\mathcal{H}}(\widetilde{e})=\widetilde{C}$ in $\mathcal{H}$. By Lemma \ref{LM}, we have

	$\widetilde{C}=Z\in \mathcal{W}[1]=\left\{X\in D(A) \ \vert \ H^{-1}(X)\in \mathcal{W}_{0},\dots,H^{-(n-1)}(X)\in \mathcal{W}_{-(n-2)},\right. \\ \left. \ \ \ \ \ \ \ \ \ \ \ \ \ \ \ \ \ \ \ \ \ \ \ \ \ \ \ \ \ \ \ \ \ \ \ \ \ \ \ \ \ \ \ \ \ \ \ \ \ \  H^{k}(X)=0 \ \forall \ k\leq-n \right\},$

	\noindent hence $\mathrm{Ker}(e)=0$ and $\mathrm{Coker}(e)=C\in \mathcal{W}_{-(n-2)}$. It follows that there is an exact sequence in $\mathrm{Mod}A$
	$$0\rightarrow W\stackrel{e}{\rightarrow} E\rightarrow C\rightarrow 0,$$
	where $E,C\in \mathcal{W}_{-(n-2)}\cap (\mathcal{W}_{-(n-2)})^{\bot_{1}}$ (recall that $(\mathcal{W}_{-(n-2)})^{\bot_{1}}$ is closed under quotients since $\mathcal{W}_{-(n-2)}\subset \mathrm{Mod}A$ consists of modules of projective dimension at most one). This shows that $\widetilde{C}$ is injective in $\mathcal{H}$. Indeed, if we take its injective envelope and consider the exact sequence
	$$0\rightarrow \widetilde{C}\rightarrow \widetilde{E^{\prime}}\rightarrow \widetilde{C^{\prime}}\rightarrow 0$$
	in $\mathcal{H}$, using the same argument as above, we see that it in fact comes from an exact sequence $0\rightarrow C\rightarrow E^{\prime}\rightarrow C^{\prime}\rightarrow 0$ in $\mathrm{Mod}A$ with $E^{\prime},C^{\prime}\in \mathcal{W}_{-(n-2)}\cap (\mathcal{W}_{-(n-2)})^{\bot_{1}}$, which must split because $C^{\prime}\in \mathcal{W}_{-(n-2)}$ and $C\in (\mathcal{W}_{-(n-2)})^{\bot_{1}}$ imply $\mathrm{Ext}^{1}_{A}(C^{\prime},C)=0$.

	$(3)$ is shown by using the dual arguments of the proof of $(2)$.
\end{proof}

\begin{theorem}\label{nsec}
	Let $T$ be an $n$-term silting complex in $D^{b}(\mathrm{mod}A)$ over a finite dimensional hereditary algebra $A$ and let  $B=\mathrm{End}_{D^{b}(\mathrm{mod}A)}T$ be the endomorphism ring of $T$. Then $\mathrm{ind}B$ has a separated $n$-section $(\mathrm{ind}\mathcal{B}_{0}, \mathrm{ind}\mathcal{B}_{1}, \dots, \mathrm{ind}\mathcal{B}_{n-1})$ with $\mathrm{ind}\mathcal{B}_{0}\subset \mathcal{L}_{B}$ and $\mathrm{ind}\mathcal{B}_{n-1}\subset \mathcal{R}_{B}$.  
\end{theorem}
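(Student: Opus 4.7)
The plan is to derive Theorem \ref{nsec} directly from Proposition \ref{P1}, whose three parts already contain the bulk of the content. The only thing left to do is to translate the closure properties of $\mathrm{ind}\mathcal{B}_{0}$ and $\mathrm{ind}\mathcal{B}_{n-1}$, combined with the homological dimension bounds, into membership of the left part $\mathcal{L}_{B}$ and the right part $\mathcal{R}_{B}$ respectively.

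First, Proposition \ref{P1}(1) immediately supplies the separated $n$-section $(\mathrm{ind}\mathcal{B}_{0},\mathrm{ind}\mathcal{B}_{1},\ldots,\mathrm{ind}\mathcal{B}_{n-1})$ of $\mathrm{ind}B$, together with the extra information that $\mathrm{ind}\mathcal{B}_{0}$ is closed under predecessors and that $\mathrm{ind}\mathcal{B}_{n-1}$ is closed under successors. So the $n$-section part of the statement requires no further argument.

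To verify $\mathrm{ind}\mathcal{B}_{0}\subseteq \mathcal{L}_{B}$, I would fix $Y\in \mathrm{ind}\mathcal{B}_{0}$ and an arbitrary path $X=X_{0}\to X_{1}\to \cdots \to X_{t}=Y$ in $\mathrm{ind}B$. Closure of $\mathrm{ind}\mathcal{B}_{0}$ under predecessors forces $X\in \mathrm{ind}\mathcal{B}_{0}$ (and in fact every intermediate $X_{i}$), and then Proposition \ref{P1}(3) yields $\mathrm{pd}\,X_{B}\leq 1$. Since the predecessor $X$ was arbitrary, the definition of $\mathcal{L}_{B}$ given in the Preliminaries gives $Y\in \mathcal{L}_{B}$, as required.

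The dual inclusion $\mathrm{ind}\mathcal{B}_{n-1}\subseteq \mathcal{R}_{B}$ is proved by the symmetric argument: any successor of an object of $\mathrm{ind}\mathcal{B}_{n-1}$ again lies in $\mathrm{ind}\mathcal{B}_{n-1}$ by closure under successors, hence has injective dimension at most one by Proposition \ref{P1}(2), and one reads off $\mathrm{ind}\mathcal{B}_{n-1}\subseteq \mathcal{R}_{B}$ from the definition. There is no real obstacle here, since the delicate step — the cone/cokernel computation in the heart via Lemma \ref{LM} establishing the dimension bounds in Proposition \ref{P1}(2)(3), using that $\mathcal{W}_{-(n-2)}\subset (\mathcal{W}_{-(n-2)})^{\bot_{1}}$ is closed under quotients over the hereditary ring $A$ — has already been carried out in the proof of Proposition \ref{P1}.
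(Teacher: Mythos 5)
Your proof is correct and takes essentially the same approach as the paper: it derives the theorem directly from Proposition~\ref{P1} by combining the closure of $\mathrm{ind}\mathcal{B}_{0}$ under predecessors with the bound $\mathrm{pd}\,X_B\leq 1$ to get $\mathrm{ind}\mathcal{B}_{0}\subset\mathcal{L}_B$, and dually for $\mathrm{ind}\mathcal{B}_{n-1}\subset\mathcal{R}_B$. You merely unwind the definitions of $\mathcal{L}_B$ and $\mathcal{R}_B$ slightly more explicitly than the paper does.
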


\begin{proof}
	By Proposition \ref{P1}, we know that $\mathrm{ind}\mathcal{B}_{0}$ is closed under predecessors and the projective dimension is at most 1. Hence we have that $\mathrm{ind}\mathcal{B}_{0}\subset \mathcal{L}_{B}$. In the same way, since $\mathrm{ind}\mathcal{B}_{n-1}$ is closed under successors and the injective dimension is at most 1, it follows that $\mathrm{ind}\mathcal{B}_{n-1}\subset \mathcal{R}_{B}$.
\end{proof}

\begin{lemma}\label{LL}
	$(\mathrm{Gen}\mathcal{V}_{n},\mathrm{Cogen}\mathcal{W}_{-n})$ is a silting torsion pair in $\mathrm{Mod}A$ with silting module $T_{n}=B_{n}\oplus \mathrm{Coker}\lambda_{n}$.
\end{lemma}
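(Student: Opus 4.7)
The plan is to identify $(\mathrm{Gen}\mathcal{V}_n,\mathrm{Cogen}\mathcal{W}_{-n})$ with the silting torsion pair $(\mathrm{Gen}(T_n),T_n^{\bot_{0}})$ of $T_n=B_n\oplus\mathrm{Coker}\lambda_n$, which is a silting $A$-module by the result cited at the end of Subsection~2.5 for pseudoflat ring epimorphisms over hereditary rings.

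First I would identify the torsion class $\mathrm{Gen}\mathcal{V}_n=\mathrm{Gen}(T_n)$. From the chain $\mathcal{X}_n\subseteq\mathcal{X}_{n+1}$ and $\mathcal{X}_n\subseteq\mathrm{Gen}(\mathcal{X}_n)=\mathcal{D}_n$ we get $B_n\in\mathcal{X}_n\subseteq\mathcal{V}_n=\mathcal{D}_n\cap\mathcal{X}_{n+1}$, while $\mathrm{Coker}\lambda_n$ is a quotient of $B_n$, hence $T_n\in\mathrm{Gen}\mathcal{V}_n$ and $\mathrm{Gen}(T_n)\subseteq\mathrm{Gen}\mathcal{V}_n$; conversely $\mathcal{V}_n\subseteq\mathcal{D}_n=\mathrm{Gen}(T_n)$ gives the reverse inclusion. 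In particular, $\mathcal{V}_n^{\bot_{0}}=\mathrm{Gen}(\mathcal{V}_n)^{\bot_{0}}=T_n^{\bot_{0}}$.

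Next, for the torsion-free class, $\mathrm{Cogen}\mathcal{W}_{-n}\subseteq T_n^{\bot_{0}}$ follows immediately from $\mathcal{W}_{-n}=\mathcal{V}_n^{\bot_{0}}\cap\mathcal{V}_{n-1}^{\bot_{1}}\subseteq T_n^{\bot_{0}}$ together with closure of $T_n^{\bot_{0}}$ under submodules and products. For the reverse inclusion $T_n^{\bot_{0}}\subseteq\mathrm{Cogen}\mathcal{W}_{-n}$, given $Y\in T_n^{\bot_{0}}$ I would produce an embedding $Y\hookrightarrow W$ with $W\in\mathcal{W}_{-n}$ via a universal extension: choose a generating family $\{V_i\}$ of $\mathcal{V}_{n-1}$ and form a short exact sequence $0\to Y\to W\to \bigoplus V_i^{(I_i)}\to 0$ whose class in $\mathrm{Ext}_A^1(\bigoplus V_i^{(I_i)},Y)$ corresponds to generating families of the $\mathrm{Ext}_A^1(V_i,Y)$. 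Since $A$ is hereditary, $\mathrm{Ext}_A^{\geq 2}$ vanishes, so $W$ lies in $\mathcal{V}_{n-1}^{\bot_{1}}$.

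The main obstacle is to simultaneously ensure $W\in T_n^{\bot_{0}}$. The long exact sequence applied to $0\to Y\to W\to \bigoplus V_i^{(I_i)}\to 0$, combined with $\mathrm{Hom}_A(T_n,Y)=0$, identifies $\mathrm{Hom}_A(T_n,W)$ with $\ker\delta$, where $\delta:\mathrm{Hom}_A(T_n,\bigoplus V_i^{(I_i)})\to\mathrm{Ext}_A^1(T_n,Y)$ is the connecting map; one must therefore force $\delta$ to be injective. The strategy is to exploit $\mathcal{V}_{n-1}\subseteq\mathcal{X}_n$ so that every morphism $T_n\to V_i$ factors through the $\mathcal{X}_n$-reflection of $T_n$, and to use the silting relation $\mathrm{Ext}^1_A(T_n,T_n)=0$ together with hereditariness to match each such factorization with a unique nonzero class in $\mathrm{Ext}_A^1(T_n,Y)$; this compatibility between the connecting maps and the ring-epimorphism chain $\lambda_{n-1}\leq\lambda_n$ is the key technical point. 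Once $\delta$ is injective we conclude $W\in T_n^{\bot_{0}}\cap\mathcal{V}_{n-1}^{\bot_{1}}=\mathcal{W}_{-n}$, completing the embedding and hence the inclusion $T_n^{\bot_{0}}\subseteq\mathrm{Cogen}\mathcal{W}_{-n}$.
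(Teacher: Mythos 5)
Your proposal correctly reduces the problem to two identifications, and the first of them—$\mathrm{Gen}\mathcal{V}_{n}=\mathrm{Gen}(T_n)=\mathcal{D}_n$, hence $\mathcal{V}_n^{\bot_0}=T_n^{\bot_0}$—is carried out correctly, as is the inclusion $\mathrm{Cogen}\mathcal{W}_{-n}\subseteq T_n^{\bot_0}$. Note, however, that the paper's own argument for this lemma is essentially a one-line citation of \cite[Theorems 5.9 and 6.7]{AH}, so you are attempting a genuinely different (self-contained) route.

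The problem is that your route is not actually completed, and there are two gaps rather than the one you flag. First, the universal-extension step is not sound: choosing the extension class so that each $\delta_j:\mathrm{Hom}_A(V_j,\bigoplus V_i^{(I_i)})\to\mathrm{Ext}^1_A(V_j,Y)$ is surjective only kills the image of $\mathrm{Ext}^1_A(V_j,Y)$ in $\mathrm{Ext}^1_A(V_j,W)$; the long exact sequence then gives $\mathrm{Ext}^1_A(V_j,W)\cong\mathrm{Ext}^1_A\bigl(V_j,\bigoplus V_i^{(I_i)}\bigr)$, which vanishes only if $\mathrm{Ext}^1_A(\mathcal{V}_{n-1},\mathcal{V}_{n-1})=0$. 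That orthogonality is not available: $\mathcal{V}_{n-1}=\mathrm{Gen}B_{n-1}\cap\mathrm{Mod}B_n$ is a torsion class inside $\mathrm{Mod}B_n$, and torsion classes are in general far from $\mathrm{Ext}^1$-self-orthogonal. So the asserted conclusion ``$W\in\mathcal{V}_{n-1}^{\bot_1}$'' does not follow from hereditariness alone. Second, the point you explicitly label ``the key technical point''—injectivity of the connecting map $\delta:\mathrm{Hom}_A(T_n,\bigoplus V_i^{(I_i)})\to\mathrm{Ext}^1_A(T_n,Y)$, needed to keep $W$ inside $T_n^{\bot_0}$—is left as a strategy sketch. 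The proposed mechanism (factor maps $T_n\to V_i$ through the $\mathcal{X}_n$-reflection and invoke $\mathrm{Ext}^1_A(T_n,T_n)=0$) does not obviously control $\mathrm{Ext}^1_A(T_n,Y)$ for an arbitrary $Y\in T_n^{\bot_0}$, and no actual argument is given. As it stands, the hard inclusion $T_n^{\bot_0}\subseteq\mathrm{Cogen}\mathcal{W}_{-n}$—which is precisely the content the paper outsources to \cite{AH}—remains unproved.
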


\begin{proof}
	$\mathcal{D}_{n}=\mathrm{Gen}\mathcal{V}_{n}={^{\bot_{0}}}\mathrm{Cogen}\mathcal{W}_{-n}={^{\bot_{0}}}({\mathcal{V}_{n}}^{\bot_{0}})$ by \cite[Theorem 5.9 and 6.7]{AH}.
\end{proof}

\begin{proposition}\label{P2}
	In the $n$-section $(\mathrm{ind}\mathcal{B}_{0}, \mathrm{ind}\mathcal{B}_{1}, \dots, \mathrm{ind}\mathcal{B}_{n-1})$, the category $\mathrm{add}\mathcal{B}_{0}$ is covariantly finite and $\mathrm{add}\mathcal{B}_{n-1}$ is contravariantly finite in $\mathrm{mod}B$.
\end{proposition}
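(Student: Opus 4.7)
The plan is to exploit the direct sum decomposition of the heart $\mathcal{H}=\mathcal{H}_{0}\vee\mathcal{H}_{1}\vee\cdots\vee\mathcal{H}_{n-1}$ established in the preceding discussion. Since $A$ is hereditary, every object of $D^{b}(\mathrm{mod}A)$ is isomorphic to the direct sum of its shifted cohomologies. Transporting along the equivalence $F:\mathcal{H}'\longrightarrow \mathrm{mod}B$, this should yield a canonical decomposition $M=M_{0}\oplus M_{1}\oplus\cdots\oplus M_{n-1}$ for every $M\in\mathrm{mod}B$, with $M_{j}\in\mathcal{B}_{j}$. A quick inspection of the defining classes of $\mathcal{B}_{0}$ and $\mathcal{B}_{n-1}$ shows that they are closed under direct sums and summands, so $\mathrm{add}\mathcal{B}_{0}=\mathcal{B}_{0}$ and $\mathrm{add}\mathcal{B}_{n-1}=\mathcal{B}_{n-1}$.

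Given this, the approximations are essentially tautological. For covariant finiteness of $\mathrm{add}\mathcal{B}_{0}$, I would take the canonical projection $\pi_{0}:M\longrightarrow M_{0}$. Any morphism $g:M\longrightarrow Y$ with $Y\in\mathrm{add}\mathcal{B}_{0}$ decomposes into components $g_{j}:M_{j}\longrightarrow Y$; since $\mathrm{Hom}_{B}(\mathcal{B}_{j},\mathcal{B}_{0})=0$ for $j\geq 1$ by the $n$-section property, all components with $j\geq 1$ vanish and $g$ factors through $\pi_{0}$. Dually, for contravariant finiteness of $\mathrm{add}\mathcal{B}_{n-1}$, the canonical inclusion $\iota_{n-1}:M_{n-1}\longrightarrow M$ is a right approximation: any $h:X\longrightarrow M$ with $X\in\mathrm{add}\mathcal{B}_{n-1}$ has components $h_{j}:X\longrightarrow M_{j}$ which are forced to be zero for $j\leq n-2$, so $h$ factors through $\iota_{n-1}$.

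The one step that requires any care is the existence of the decomposition of $M\in\mathrm{mod}B$ into pieces $M_{j}\in\mathcal{B}_{j}$. Although the splitting of a bounded complex as the sum of its shifted cohomologies is classical over a hereditary algebra, one must verify that each cohomology summand of the corresponding object in $\mathcal{H}'$ actually lies in the prescribed classes $\mathcal{V}_{0}$, $\mathcal{V}_{j}\cap\mathcal{W}_{-(j-1)}$ for $1\le j\le n-2$, and $\mathcal{W}_{-(n-2)}$, and not merely in $D^{b}(\mathrm{mod}A)$; this is where the explicit description of $\mathcal{H}$ obtained in Section~3.2 is used. Beyond this bookkeeping, the argument is purely formal and relies only on the Hom-vanishing built into the definition of an $n$-section.
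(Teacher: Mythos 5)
Your proof is correct and takes essentially the same approach as the paper's. The paper's one-line proof invokes that $(\mathrm{add}(\mathcal{B}_{1}\cup\cdots\cup\mathcal{B}_{n-1}), \mathrm{add}\mathcal{B}_{0})$ and $(\mathrm{add}\mathcal{B}_{n-1}, \mathrm{add}(\mathcal{B}_{0}\cup\cdots\cup\mathcal{B}_{n-2}))$ are split torsion pairs in $\mathrm{mod}B$; you have simply unpacked why this gives the required approximations, using the direct sum decomposition $M=M_{0}\oplus\cdots\oplus M_{n-1}$ (which was already established in Proposition \ref{P1} and the discussion preceding it) together with the Hom-vanishing from the $n$-section axioms.
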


\begin{proof}
	This follows from the fact that $(\mathrm{add}(\mathcal{B}_{1}\cup\cdots\cup\mathcal{B}_{n-1}), \mathrm{add}\mathcal{B}_{0})$ and $(\mathrm{add}\mathcal{B}_{n-1}, \mathrm{add}(\mathcal{B}_{0}\cup\cdots\cup\mathcal{B}_{n-2}))$ are split torsion pairs in $\mathrm{mod}B$. 
\end{proof}

\begin{theorem}
	When $n=3$, the trisection $(\mathrm{ind}\mathcal{B}_{0}, \mathrm{ind}\mathcal{B}_{1}, \mathrm{ind}\mathcal{B}_{2})$  in Theorem~\ref{nsec} is functorially finite. 
\end{theorem}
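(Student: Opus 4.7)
By Proposition~\ref{P2}, $\mathrm{add}\mathcal{B}_0$ is covariantly finite and $\mathrm{add}\mathcal{B}_2$ is contravariantly finite in $\mathrm{mod}B$, coming from the two split torsion pairs $(\mathrm{add}(\mathcal{B}_1\cup\mathcal{B}_2),\mathrm{add}\mathcal{B}_0)$ and $(\mathrm{add}\mathcal{B}_2,\mathrm{add}(\mathcal{B}_0\cup\mathcal{B}_1))$ used in its proof. Applying the Auslander--Smal\o\ proposition (Proposition~2.1) to each pair reduces the remaining task to: \emph{(i)} producing $N\in\mathrm{mod}B$ with $\mathrm{add}\mathcal{B}_0=\mathrm{cogen}N$; \emph{(ii)} producing $M\in\mathrm{mod}B$ with $\mathrm{add}\mathcal{B}_2=\mathrm{gen}M$; and \emph{(iii)} showing by hand that $\mathrm{add}\mathcal{B}_1$ is functorially finite, since $\mathcal{B}_1$ is neither a torsion nor a torsion-free class in $\mathrm{mod}B$.

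For \emph{(ii)}, I would transfer across the equivalence $F:\mathcal{H}'\to\mathrm{mod}B$: by Lemma~\ref{LL} applied with $n=1$, the finite-dimensional silting module $T_1=B_1\oplus\mathrm{Coker}\lambda_1$ yields a functorially finite silting torsion pair $(\mathrm{gen}T_1,\mathcal{W}_{-1}\cap\mathrm{mod}A)$ in $\mathrm{mod}A$. Using Lemma~\ref{LM}, computing epimorphisms in $\mathcal{H}$ between $[2]$-shifts of modules in $\mathcal{W}_{-1}\cap\mathrm{mod}A$ reduces to computing cokernels in $\mathrm{mod}A$ modulo the torsion class $\mathrm{gen}T_1$. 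Building $M$ as the $F$-image of a finite direct sum $N_1[2]$, where $N_1\in\mathcal{W}_{-1}\cap\mathrm{mod}A$ is produced by the functorial finiteness of the silting pair together with the hereditariness of $A$, then yields $\mathrm{add}\mathcal{B}_2=\mathrm{gen}M$. The argument for \emph{(i)} is dual, using $T_0$ and Proposition~\ref{P1}(3).

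For \emph{(iii)}: once \emph{(i)} and \emph{(ii)} are in place, both complementary torsion pairs are functorially finite and split, so every $M\in\mathrm{mod}B$ decomposes canonically as $M=M_0\oplus M_1\oplus M_2$ with $M_i\in\mathrm{add}\mathcal{B}_i$. Since $\mathrm{Hom}_B(\mathcal{B}_1,\mathcal{B}_0)=0$, a right $\mathrm{add}\mathcal{B}_1$-approximation of $M$ reduces to one of $M_2\in\mathrm{add}\mathcal{B}_2$. Translating via $F^{-1}$, one must represent, for each $V\in\mathcal{W}_{-1}\cap\mathrm{mod}A$, the functor $\mathrm{Ext}^1_A(-,V)$ on $\mathrm{gen}T_1\cap T_0^{\bot_{0}}\cap\mathrm{mod}A$ by an object of that same subcategory; a Bongartz-type universal extension $0\to V\to X\to T_1^k\to 0$ in $\mathrm{mod}A$ supplies the required $X$, and left approximations are built dually.

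\emph{Main obstacle.} The crucial technical point is ensuring that the constructed generators, cogenerators, and universal extensions lie in the correct intersection classes: for \emph{(iii)}, that the middle term $X$ of the universal extension stays inside $T_0^{\bot_{0}}$ (not merely inside $\mathrm{gen}T_1$); for \emph{(i)} and \emph{(ii)}, that the modules supplied in $\mathrm{mod}A$ shift to honest generators or cogenerators in $\mathrm{mod}B$ under $F$. Both points rest on hereditariness of $A$ and on the chain relation $\lambda_0\leq\lambda_1$, equivalently $\mathrm{gen}T_0\subseteq\mathrm{gen}T_1$, so the specific 3-term structure of $T$ and the interplay between the two homological ring epimorphisms are essential.
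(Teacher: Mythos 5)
Your reduction to the middle piece and your use of the equivalence $F:\mathcal{H}'\to\mathrm{mod}B$ match the paper's strategy, but the technical engine you propose does not. The paper's proof attacks $\mathrm{add}\mathcal{B}_1$ directly: it shows that $\mathcal{V}_1\cap\mathcal{W}_0\cap\mathrm{mod}A$ is functorially finite in $\mathrm{mod}A$ (using the functorially finite torsion pairs coming from $T_0$ and $T_1$ together with taking a trace, resp. a quotient by a trace), then transports this across $F$. Because $\mathcal{B}_0$ and $\mathcal{B}_1$ live one cohomological degree apart, morphisms from $\mathcal{B}_0$ to $\mathcal{B}_1$ are $\mathrm{Ext}^1_A$-groups, and the paper converts these to Hom-groups via Auslander--Reiten duality $\mathrm{Ext}^1_A(X,V)\cong D\mathrm{Hom}_A(V,\tau X)$, checks that this isomorphism is $\mathrm{End}_A V$-linear, and then applies Auslander's finite-generation criterion \cite[Lemma~3]{A} to produce the left (resp. right) $\mathrm{add}\mathcal{B}_1$-approximation. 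Finally, it quotes \cite[Sec.~2.6 Theorem]{AACPT} together with Proposition~\ref{P2} to finish.

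Your proposal inverts this order: establish $\mathrm{add}\mathcal{B}_0=\mathrm{cogen}N$, $\mathrm{add}\mathcal{B}_2=\mathrm{gen}M$ and then deduce functorial finiteness of $\mathrm{add}\mathcal{B}_1$. That global strategy is sound (indeed, once both split torsion pairs are functorially finite, a right $\mathrm{add}(\mathcal{B}_0\cup\mathcal{B}_1)$-approximation composed with projection onto the $\mathcal{B}_1$-summand gives a right $\mathrm{add}\mathcal{B}_1$-approximation of any $M_2$, using $\mathrm{Hom}_B(\mathcal{B}_1,\mathcal{B}_0)=0$; so your step \emph{(iii)} is actually a formal consequence of \emph{(i)} and \emph{(ii)}, and the Bongartz-extension discussion is not needed). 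The genuine gap is in \emph{(i)} and \emph{(ii)} themselves: you assert that a finite generator/cogenerator ``is produced by the functorial finiteness of the silting pair together with hereditariness,'' but you never exhibit it, and this is exactly where the difficulty lies. Concretely, to get $\mathrm{add}\mathcal{B}_2=\mathrm{gen}M$ you must realize, for every $W\in\mathcal{W}_{-1}\cap\mathrm{mod}A$, the object $F(W[2])$ as an epimorphic image in $\mathcal{H}'$ of finitely many copies of a fixed $F(N[2])$; by Lemma~\ref{LM} and hereditariness this translates into a boundedness condition on the $A$-module side that is not obviously supplied by the silting torsion pair (note also that the torsion-free class of $t_1$ in $\mathrm{mod}A$ is $\mathrm{Cogen}\mathcal{W}_{-1}\cap\mathrm{mod}A$, not $\mathcal{W}_{-1}\cap\mathrm{mod}A$ as written). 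As for the Bongartz universal extension $0\to V\to X\to T_1^k\to 0$: its defining property is to kill $\mathrm{Ext}^1_A(T_1,X)$, which is a vanishing statement, not a representability statement; it does not by itself make $\mathrm{Ext}^1_A(-,V)$ on $\mathcal{V}_1\cap\mathcal{W}_0\cap\mathrm{mod}A$ ``finitely generated,'' which is precisely what the approximation requires. The missing ingredient your argument would need—and the one the paper supplies—is the AR-duality reformulation of these $\mathrm{Ext}^1$-groups as dual Hom-groups, after which the finite generation follows from the right/left $(\mathcal{V}_1\cap\mathcal{W}_0\cap\mathrm{mod}A)$-approximations of $\tau X$, resp.\ $\tau^{-1}X$.
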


\begin{proof}
	For $n=3$, we know that $$\mathcal{B}_{0}=F((\mathcal{V}_{0}\cap \mathrm{mod}A)[0]), \ \mathcal{B}_{1}=F( (\mathcal{V}_{1}\cap\mathcal{W}_{0}\cap \mathrm{mod}A)[1]), \ \mathcal{B}_{2}=F((\mathcal{W}_{-1}\cap \mathrm{mod}A)[2]).$$

	Note that $\mathcal{D}_{0}=\mathrm{Gen}\mathcal{V}_{0}\subset \mathcal{D}_{1}=\mathcal{V}_{1} \ \text{and} \ \mathrm{Cogen}\mathcal{W}_{-1}\subset \mathrm{Cogen}\mathcal{W}_{0}=\mathcal{W}_{0}$.
	By Lemma \ref{LL}, we have two torsion pairs in $\mathrm{Mod}A$
	$$t_{0}=(\mathrm{Gen}\mathcal{V}_{0},\mathcal{W}_{0}),t_{1}=(\mathcal{V}_{1},\mathrm{Cogen}\mathcal{W}_{-1})$$
	
	We first prove that $\mathrm{add}\mathcal{B}_{1}$ is covariantly finite in $\mathrm{mod}B$.

	Step 1: Every $M\in \mathrm{mod}A$ admits a right $(\mathcal{V}_{1}\cap\mathcal{W}_{0}\cap \mathrm{mod}A)$-approximation.

	For any $M\in \mathrm{mod}A$, take $f:W\rightarrow M$ such that $f$ is a right $(\mathcal{W}_{0}\cap \mathrm{mod}A)$-approximation of $M$, which is guaranteed by the functorially finite torsion pair $(\mathrm{gen}T_{0},\mathcal{W}_{0}\cap \mathrm{mod}A)$ in $\mathrm{mod}A$ induced by the minimal silting $A$-module $T_{0}$. Let $\mathrm{tr}(W)$ be the trace of $\mathcal{V}_{1}$ in $W$, that is, the sum of the images of all homomorphism from modules in $\mathcal{V}_{1}$ to $W$. Because $\mathcal{V}_{1}$ is closed under images and direct (hence arbitrary) sums, $\mathrm{tr}(W)$ is the largest submodule of $W$ that lies in $\mathcal{V}_{1}$. Denote the inclusion by $\iota: \mathrm{tr}(W)\rightarrow W$. On the other hand, $\mathcal{W}_{0}\cap \mathrm{mod}A$ is a torsion-free class, which is closed under submodules. Hence $\mathrm{tr}(W)$ lies in $\mathcal{V}_{1}\cap\mathcal{W}_{0}\cap \mathrm{mod}A$. It is easy to check that $h:=\iota\circ f$ is a right $(\mathcal{V}_{1}\cap\mathcal{W}_{0}\cap \mathrm{mod}A)$-approximation of $M$.

	Step 2: Let $\left\{V_{i} \ \vert \ i\in I\right\}$ be a complete irredundant set of representatives of the isomorphism classes of $\mathcal{V}_{1}\cap\mathcal{W}_{0}\cap \mathrm{mod}A$, and put $V=\bigoplus_{i\in I}V_{i}$. It is easy to see that $V \in \mathcal{V}_{1}\cap \mathcal{W}_{0}$ since the torsion class $\mathcal{V}_{1}$ is closed under direct sums and the torsion-free class $\mathcal{W}_{0}$ is closed under direct products and submodules. By Step 1, any module $M\in \mathrm{mod}A$ has an $\mathrm{add}V$-precover, which means that $\mathrm{Hom}_{A}(V,M)_{\mathrm{End}V}$ is finitely generated as right $\mathrm{End}V$-module by \cite[Lemma 3]{A}.

	Step 3: For any module $X\in \mathrm{mod}A$, there is an $\mathrm{End}_{A}V$-linear isomorphism
	${_{\mathrm{End}_{A}V}}\mathrm{Ext}^{1}_{A}(X,V)\stackrel{\phi}\cong {_{\mathrm{End}_{A}V}}D\mathrm{Hom}_{A}(V,\tau X)$.

	Obviously, $\mathrm{Hom}_{A}(V,\tau X)$ is naturally endowed with a structure of right $\mathrm{End}_{A}V$-module, by defining $(fs)(v):=f(s(v))$ for any $f\in \mathrm{Hom}_{A}(V,\tau X),\ s\in \mathrm{End}_{A}V$ and $v\in V$. Hence $D\mathrm{Hom}_{A}(V,\tau X)$ has a left $\mathrm{End}_{A}V$-module structure via the formula $(s\alpha)f:=\alpha(fs)$ for any $\alpha\in D\mathrm{Hom}_{A}(V,\tau X),\ s\in \mathrm{End}_{A}V$ and $f\in \mathrm{Hom}_{A}(V,\tau X)$.

	$\mathrm{Ext}^{1}_{A}(X,V)$ is also a left $\mathrm{End}_{A}V$-module via the map $\mathrm{End}_{A}V \times \mathrm{Ext}^{1}_{A}(X,V)\longrightarrow \mathrm{Ext}^{1}_{A}(X,V)$, $(s,[\varepsilon])\longmapsto [s\cdot \varepsilon]$, see, for example, \cite[I, $\S 5$]{ARS}.

	We have an isomorphism $\mathrm{Ext}^{1}_{A}(X,V)\cong D\mathrm{Hom}_{A}(V,\tau X)$, which is functorial in both variables as $A$-modules, see \cite{Krause}. Therefore, for $s\in \mathrm{End}_{A}V$, we have the following commutative diagram:
	$$\xymatrix{\mathrm{Ext}^{1}_{A}(X,V)\ar[r]^{\phi_{X,V}}\ar[d]_{\mathrm{Ext}^{1}_{A}(X,s)}&D\mathrm{Hom}_{A}(V,\tau X)\ar[d]^{D\mathrm{Hom}_{A}(s,\tau X)}\\
		\mathrm{Ext}^{1}_{A}(X,V)\ar[r]^{\phi_{X,V}}&D\mathrm{Hom}_{A}(V,\tau X)
	}$$
	
	For any $[\varepsilon]\in \mathrm{Ext}^{1}_{A}(X,V)$, we get $\phi_{X,V}(s\cdot [\varepsilon])=D\mathrm{Hom}_{A}(s,\tau X)(\phi_{X,V}([\varepsilon]))=\phi_{X,V}([\varepsilon])\circ \mathrm{Hom}_{A}(s,\tau X)=s\cdot \phi_{X,V}([\varepsilon])$. It follows that $\phi_{X,V}$ is $\mathrm{End}_{A}V$-linear.

	Step 4: Note that there are no morphisms from right to the left in the trisection $(\mathrm{ind}\mathcal{B}_{0}, \mathrm{ind}\mathcal{B}_{1}, \mathrm{ind}\mathcal{B}_{2})$. It suffices to show that there exists a left $\mathrm{add}\mathcal{B}_{1}$-approximation for any module in $\mathrm{ind}\mathcal{B}_{0}$.

	Consider a module $B_{0}\in \mathrm{ind}\mathcal{B}_{0}$. There exists $X\in \mathcal{V}_{0}\cap\mathrm{mod}A$ such that $B_{0}=FX$. It follows from Step 2 that $\mathrm{Hom}_{A}(V,\tau X)_{\mathrm{End}V}$ is finitely generated. Thus $ {_{\mathrm{End}_{A}V}}D\mathrm{Hom}_{A}(V,\tau X)$ is finitely generated. By Step 3, we have isomorphisms
	$${_{\mathrm{End}_{A}V}}D\mathrm{Hom}_{A}(V,\tau X)\cong {_{\mathrm{End}_{A}V}}\mathrm{Ext}^{1}_{A}(X,V)\cong {_{\mathrm{End}_{A}V}}\mathrm{Hom}_{D^{b}(\mathrm{mod}A)}(X,V[1])$$
	$$\ \ \ \ \ \ \ \ \ \ \ \ \ \ \ \ \ \ \ \ \ \ \ \ \ \cong {_{\mathrm{End}_{A}V[1]}}\mathrm{Hom}_{\mathcal{H}^{\prime}}(X,V[1])\cong {_{\mathrm{End}_{B}FY}}\mathrm{Hom}_{B}(B_{0},FY)$$
	where $Y:=V[1]=\bigoplus_{i\in I}V_{i}[1]$ and
	$\left\{F(V_{i}[1]) \ \vert \ i\in I\right\}$ is a complete irredundant set of representatives of the isomorphism classes of $\mathcal{B}_{1}$. It follows that the left $\mathrm{End}_{B}FY$-module ${_{\mathrm{End}_{B}FY}}\mathrm{Hom}_{B}(B_{0},FY)$ is finitely generated. Consequently, $B_{0}$ has a left $\mathrm{add}\mathcal{B}_{1}$-approximation by \cite[Lemma 3]{A}, as desired.

	Now we show that $\mathrm{add}\mathcal{B}_{1}$ is contravariantly finite in $\mathrm{mod}B$.

	Step $1^{\prime}$: Every $M\in \mathrm{mod}A$ admits a left $(\mathcal{V}_{1}\cap\mathcal{W}_{0}\cap \mathrm{mod}A)$-approximation.

	For any $M\in \mathrm{mod}A$, since $\mathcal{V}_{1}\cap \mathrm{mod}A$ is functorially finite, there exists $f:M\longrightarrow V$ with $V\in \mathcal{V}_{1}$ such that $f$ is a left $\mathcal{V}_{1}\cap \mathrm{mod}A$-approximation of $M$. Let $tr(V)$ be the trace of $\mathrm{Gen}\mathcal{V}_{0}$ in $V$ and  $\pi:V\longrightarrow V/tr(V)$ be the canonical epimorphism. Obviously, $V/tr(V)\in (\mathcal{V}_{1}\cap\mathcal{W}_{0}\cap \mathrm{mod}A)$ since $\mathcal{V}_{1}$ is closed under quotients. It is easy to check that $\pi f:M\longrightarrow V/tr(V)$ is a left $(\mathcal{V}_{1}\cap\mathcal{W}_{0}\cap \mathrm{mod}A)$-approximation of $M$.

	Step $2^{\prime}$: Let $V$ be the module as in the Step 2. By Step $1^{\prime}$, any module $M\in \mathrm{mod}A$ has an $\mathrm{add}V$-preenvelope, which means that ${_{\mathrm{End}V}}\mathrm{Hom}_{A}(M,V)$ is finitely generated as left $\mathrm{End}V$-module by \cite[Lemma 3]{A}.

	Step $3^{\prime}$: For any module $B_{2}\in \mathrm{ind}\mathcal{B}_{2}$, there exists $X\in (\mathcal{W}_{-1}\cap \mathrm{mod}A)$ such that $B_{2}=F(X[2])$. It follows from Step $2^{\prime}$ that ${_{\mathrm{End}V}}\mathrm{Hom}_{A}(\tau^{-1}X, V)$ is finitely generated. Thus $D\mathrm{Hom}_{A}(\tau^{-1}X, V)_{\mathrm{End}_{A}V}$ is finitely generated. Moreover, we have isomorphisms
	$$D\mathrm{Hom}_{A}(\tau^{-1}X, V)_{\mathrm{End}_{A}V}\cong \mathrm{Ext}^{1}_{A}(V,X)_{\mathrm{End}_{A}V}\cong \mathrm{Hom}_{D^{b}(\mathrm{mod}A)}(V[1],X[2])_{\mathrm{End}_{A}V[1]}$$
	$$\ \ \ \ \ \ \ \ \ \ \ \ \ \ \ \ \ \ \ \ \ \ \ \ \ \ \ \cong \mathrm{Hom}_{\mathcal{H}^{\prime}}(V[1],X[2])_{\mathrm{End}_{A}V[1]}\cong\mathrm{Hom}_{B}(FY,B_{2})_{\mathrm{End}_{B}FY}$$
	where $Y:=V[1]=\bigoplus_{i\in I}V_{i}[1]$ and
	$\left\{F(V_{i}[1]) \ \vert \ i\in I\right\}$ is a complete irredundant set of representatives of the isomorphism classes of $\mathcal{B}_{1}$. It follows that the right $\mathrm{End}_{B}FY$-module $\mathrm{Hom}_{B}(FY,B_{2})_{\mathrm{End}_{B}FY}$ is finitely generated. Consequently, $B_{2}$ has a right $\mathrm{add}\mathcal{B}_{1}$-approximation by \cite[Lemma 3]{A}, as desired.

	Therefore, $\mathrm{add}\mathcal{B}_{1}$ is a functorially finite subcategory of $\mathrm{mod}B$. The assertion follows from Proposition \ref{P2} and \cite[Sec. 2.6 Theorem]{AACPT}.
\end{proof}

Finally, we note that every functorially finite $n$-section over a hereditary algebra, under mild conditions, is associated to an $n$-term silting complex.

\begin{proposition}
	If $A$ is a hereditary algebra with a functorially finite $n$-section $(\mathcal{A}_{1}, \mathcal{A}_{2}, \dots, \mathcal{A}_{n})$ in $\mathrm{mod}A$ and $\mathrm{Ext}_{A}^{1}(\mathcal{A}_{i}, \mathcal{A}_{j})=0$ for $i-j\geq 2$, then there exists a chain of finite dimensional homological ring epimorphisms $\lambda_{i}:A\rightarrow B_{i}$ such that $0_{A}\leq\lambda_{1}\leq \cdots \leq\lambda_{n-1}\leq id_{A}$ and $\mathrm{gen}B_{1}=\mathrm{add}\mathcal{A}_{n},\mathrm{gen}B_{2}=\mathrm{add}(\mathcal{A}_{n-1}\cup\mathcal{A}_{n}),\ldots,\mathrm{gen}B_{n-1}=\mathrm{add}(\mathcal{A}_{2}\cup\cdots\cup\mathcal{A}_{n})$.
\end{proposition}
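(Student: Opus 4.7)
The plan is, for each $k \in \{1, \ldots, n-1\}$, to introduce
\[\mathcal{T}_k := \mathrm{add}(\mathcal{A}_{n-k+1} \cup \cdots \cup \mathcal{A}_n) \subseteq \mathrm{mod}\, A\]
and to produce $\lambda_k$ as the homological ring epimorphism attached to a 2-term silting complex whose silting class is $\mathcal{T}_k$. To verify that $\mathcal{T}_k$ is a torsion class, I use only the Hom-orthogonality $\mathrm{Hom}_A(\mathcal{A}_i, \mathcal{A}_j)=0$ for $i > j$ afforded by the $n$-section. For closure under quotients: if $M \twoheadrightarrow N$ with $M \in \mathcal{T}_k$, any indecomposable summand of $N$ lying in $\mathcal{A}_\ell$ receives a non-zero map from some summand of $M$ in $\mathcal{A}_i$ with $i \geq n-k+1$, so Hom-orthogonality forces $\ell \geq n-k+1$. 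For closure under extensions, given $0 \to X \to Y \to Z \to 0$ with $X,Z \in \mathcal{T}_k$, a would-be summand $Y_0 \in \mathcal{A}_\ell$ of $Y$ with $\ell \leq n-k$ would satisfy $\mathrm{Hom}(X,Y_0)=0$, so the projection $Y \to Y_0$ would factor through $Z$; a standard diagram chase then exhibits $Y_0$ as a summand of $Z$, a contradiction.

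Functorial finiteness of $\mathcal{T}_k$ will follow by assembling right (resp.\ left) $\mathrm{add}\,\mathcal{A}_j$-approximations: the direct sum $\bigoplus_{j \geq n-k+1} f_j$ of the right $\mathrm{add}\,\mathcal{A}_j$-approximations $f_j : X_j \to M$ is a right $\mathcal{T}_k$-approximation, since any morphism $Z \to M$ with $Z \in \mathcal{T}_k$ decomposes along $Z = \bigoplus_j Z_j$ with $Z_j \in \mathrm{add}\,\mathcal{A}_j$; the covariantly finite case is dual. Once $\mathcal{T}_k$ is known to be a functorially finite torsion class, the Adachi--Iyama--Reiten correspondence (support $\tau$-tilting = 2-term silting) produces a finite-dimensional silting $A$-module $M_k$ with $\mathrm{gen}(M_k) = \mathcal{T}_k$, i.e., a 2-term silting complex in $K^b(\mathrm{proj}\,A)$. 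Applying Theorem~\ref{TH} to this complex yields a finite-dimensional homological ring epimorphism $\lambda_k : A \to B_k$ with $M_k \simeq B_k \oplus \mathrm{Coker}\,\lambda_k$, and Lemma~\ref{LL} identifies the silting class with $\mathrm{gen}(B_k)$, so $\mathrm{gen}(B_k) = \mathcal{T}_k$, as desired.

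The chain $0_A \leq \lambda_1 \leq \cdots \leq \lambda_{n-1} \leq \mathrm{id}_A$ will then follow from the trivial inclusions $\mathcal{T}_1 \subseteq \cdots \subseteq \mathcal{T}_{n-1}$ via the order-preserving correspondence of Theorem~\ref{Th1} between silting classes, extension-closed bireflective subcategories, and epiclasses. The Ext-vanishing hypothesis $\mathrm{Ext}^1_A(\mathcal{A}_i,\mathcal{A}_j) = 0$ for $i-j \geq 2$ enters to ensure that the bireflective subcategories $\mathcal{X}_k \subseteq \mathrm{Mod}\,A$ associated to the $\lambda_k$ nest compatibly, so that (via Proposition~\ref{Pro}) the chain $(\lambda_k)_k$ actually assembles into a single $n$-term silting complex in $D(A)$. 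The principal obstacle is the passage from a functorially finite torsion class to a homological ring epimorphism of the prescribed form; this step relies crucially on the hereditary hypothesis, which makes pseudoflat ring epimorphisms automatically homological and enables the application of Theorem~\ref{TH}.
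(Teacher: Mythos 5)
There is a genuine gap at the step that establishes the chain relation $\lambda_1 \leq \cdots \leq \lambda_{n-1}$, which is actually where the Ext-vanishing hypothesis must do its work and where the paper's proof is concentrated.

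You claim the chain follows from the ``trivial inclusions $\mathcal{T}_1 \subseteq \cdots \subseteq \mathcal{T}_{n-1}$ via the order-preserving correspondence of Theorem~\ref{Th1} between silting classes, extension-closed bireflective subcategories, and epiclasses.'' But Theorem~\ref{Th1} is a bijection between epiclasses and bireflective subcategories only; silting classes are not part of that correspondence. The bireflective subcategory attached to $\lambda_k$ is $\mathcal{X}_k = \alpha(\mathrm{Gen}T_k)$, where $\alpha(\mathcal{D})$ picks out those $X\in\mathcal{D}$ such that every morphism $g\colon Y\to X$ with $Y\in\mathcal{D}$ has $\mathrm{Ker}\,g\in\mathcal{D}$, and $\lambda_k\leq\lambda_{k+1}$ is equivalent to $\alpha(\mathrm{Gen}T_k)\subseteq\alpha(\mathrm{Gen}T_{k+1})$. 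The operation $\alpha$ is \emph{not} monotone in $\mathcal{D}$: enlarging $\mathcal{D}$ strengthens the kernel condition that $X$ must satisfy, so $\mathrm{Gen}T_k\subseteq\mathrm{Gen}T_{k+1}$ does not yield $\alpha(\mathrm{Gen}T_k)\subseteq\alpha(\mathrm{Gen}T_{k+1})$. Your parenthetical remark that the Ext-vanishing is needed ``so that the chain assembles into a single $n$-term silting complex'' misattributes its role; nothing about assembling a silting complex is asserted in the statement, and the Ext-vanishing is precisely what makes the chain of epiclasses exist at all.

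Concretely, the missing step is a verification of $\mathrm{Gen}T_k\subseteq\alpha(\mathrm{Gen}T_{k+1})$ (which suffices since $\alpha(\mathrm{Gen}T_k)\subseteq\mathrm{Gen}T_k$). This requires showing that for $X\in\mathrm{Gen}T_k$ and $g\colon Y\to X$ with $Y\in\mathrm{Gen}T_{k+1}$, the kernel of $g$ lies in $\mathrm{Gen}T_{k+1}={}^{\perp_0}(\mathcal{A}_1\cup\cdots\cup\mathcal{A}_{n-k-1})$. One applies $\mathrm{Hom}_A(-,A')$ for $A'\in\mathcal{A}_1\cup\cdots\cup\mathcal{A}_{n-k-1}$ to the two short exact sequences through $\mathrm{Im}\,g$, uses that $A$ is hereditary, and crucially invokes $\mathrm{Ext}^1_A(X,A')=0$, which follows from $\mathrm{Ext}^1_A(\mathcal{A}_i,\mathcal{A}_j)=0$ for $i-j\geq 2$ together with pure-injectivity of finite-dimensional modules to pass to direct limits. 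The remainder of your proposal — realizing each $\mathcal{T}_k$ as a functorially finite torsion class, invoking the Adachi--Iyama--Reiten correspondence, passing to homological ring epimorphisms — agrees in substance with the paper's argument (though the paper simply cites the split torsion pairs coming from the $n$-section rather than re-deriving closure under quotients and extensions). It is only the nesting of the $\mathcal{X}_k$ that is asserted without proof, and that is the one nontrivial step.
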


\begin{proof}
	Since every $\mathrm{add}\mathcal{A}_{i}$ is functorially finite, there are $n-1$ split torsion pairs with functorially finite torsion classes
	$$(\mathrm{add}\mathcal{A}_{n},\mathrm{add}(\mathcal{A}_{1}\cup\cdots\cup \mathcal{A}_{n-1})), (\mathrm{add}(\mathcal{A}_{n-1}\cup\mathcal{A}_{n}),\mathrm{add}(\mathcal{A}_{1}\cup\cdots\cup \mathcal{A}_{n-2})),\ldots,$$
	$$(\mathrm{add}(\mathcal{A}_{3}\cup\cdots\cup\mathcal{A}_{n}),\mathrm{add}(\mathcal{A}_{1}\cup \mathcal{A}_{2})),(\mathrm{add}(\mathcal{A}_{2}\cup\cdots\cup\mathcal{A}_{n}),\mathrm{add}\mathcal{A}_{1})$$
	in $\mathrm{mod}A$. By \cite[Theorem 2.7]{AIR}, there exist $n-1$ finite dimensional silting modules $T_{i}$ such that 
	$$\mathrm{gen}T_{1}=\mathrm{add}\mathcal{A}_{n},\mathrm{gen}T_{2}=\mathrm{add}(\mathcal{A}_{n-1}\cup\mathcal{A}_{n}),\ldots,\mathrm{gen}T_{n-2}=\mathrm{add}(\mathcal{A}_{3}\cup\cdots\cup\mathcal{A}_{n}),$$
	$$\mathrm{gen}T_{n-1}=\mathrm{add}(\mathcal{A}_{2}\cup\cdots\cup\mathcal{A}_{n}).$$
	It follows from \cite[(4.4) Lemma]{CB} that there are torsion pairs in $\mathrm{Mod}A$
	$$(\mathrm{Gen}T_{1}=\varinjlim \mathcal{A}_{n},\varinjlim (\mathcal{A}_{1}\cup\cdots\cup \mathcal{A}_{n-1})),\ldots,(\mathrm{Gen}T_{n-1}=\varinjlim(\mathcal{A}_{2}\cup\cdots\cup\mathcal{A}_{n}),\varinjlim\mathcal{A}_{1}).$$
	By \cite[Corollary 5.12]{AMV2}, every $T_{i}$ corresponds to a homological ring epimorphism $\lambda_{i}:A\longrightarrow B_{i}$ for $1\leq i\leq n-1$ and then we have that  $\mathrm{Mod}B_{i}=\alpha(\mathrm{Gen}T_{i}):=\left\{X\in \mathrm{Gen}T_{i} \ \vert \ \forall (g:Y\rightarrow X)\in \mathrm{Gen}T_{i}, \mathrm{Ker}g\in \mathrm{Gen}T_{i}\right\}$ by \cite[Proposition 5.6]{AMV2}.

	We need to show that $\lambda_{i}\leq\lambda_{i+1}$, which is equivalent to $\alpha(\mathrm{Gen}T_{i})\subseteq \alpha(\mathrm{Gen}T_{i+1})$. So it suffices to show that $\mathrm{Gen}T_{i}\subseteq \alpha(\mathrm{Gen}T_{i+1})$. For any $X\in \mathrm{Gen}T_{i}$, consider $g:Y\rightarrow X$ in $\mathrm{Gen}T_{i+1}$. We want to prove $K:=\mathrm{Ker}g\in \mathrm{Gen}T_{i+1}$. In fact, by \cite[Proposition 2.14]{SF},
	$$\mathrm{Gen}T_{i}=\varinjlim (\mathcal{A}_{n-i+1}\cup\cdots\cup\mathcal{A}_{n})={^{\bot_{0}}}(\mathcal{A}_{1}\cup\cdots\cup \mathcal{A}_{n-i}),$$
	$$\mathrm{Gen}T_{i+1}=\varinjlim (\mathcal{A}_{n-i}\cup\cdots\cup\mathcal{A}_{n})={^{\bot_{0}}}(\mathcal{A}_{1}\cup\cdots\cup \mathcal{A}_{n-i-1}).$$
	For any $A^{\prime}\in \mathcal{A}_{1}\cup\cdots\cup \mathcal{A}_{n-i-1}$, apply $\mathrm{Hom}_{A}(-,A^{\prime})$ to the short exact sequences
	$$0\longrightarrow K\longrightarrow Y\longrightarrow \mathrm{Im}g\longrightarrow 0,$$
	$$0\longrightarrow \mathrm{Im}g\longrightarrow X\longrightarrow \mathrm{Coker}g\longrightarrow 0,$$
	respectively. We obtain two long exact sequences 
	$$\cdots\longrightarrow \mathrm{Hom}_{A}(Y,A^{\prime})\longrightarrow \mathrm{Hom}_{A}(K,A^{\prime})\longrightarrow \mathrm{Ext}^{1}_{A}(\mathrm{Im}g,A^{\prime})\longrightarrow\cdots,$$
	$$\cdots\longrightarrow\mathrm{Ext}^{1}_{A}(X,A^{\prime})\longrightarrow \mathrm{Ext}^{1}_{A}(\mathrm{Im}g,A^{\prime})\longrightarrow \mathrm{Ext}^{2}_{A}(\mathrm{Coker}g,A^{\prime})\longrightarrow\cdots.$$
	Obviously, $\mathrm{Hom}_{A}(Y,A^{\prime})=0$ since $Y\in \mathrm{Gen}T_{i+1}={^{\bot_{0}}}(\mathcal{A}_{1}\cup\cdots\cup \mathcal{A}_{n-i-1}),\ A^{\prime}\in \mathcal{A}_{1}\cup\cdots\cup \mathcal{A}_{n-i-1}$, and $\mathrm{Ext}^{2}_{A}(\mathrm{Coker}g,A^{\prime})=0$ since $A$ is hereditary. Moreover, since every finite dimensional module is pure-injective and $\mathrm{Ext}_{A}^{1}(\mathcal{A}_{i}, \mathcal{A}_{j})=0$ for $i-j\geq 2$, we have the isomorphism 
	$$\mathrm{Ext}^{1}_{A}(\varinjlim (\mathcal{A}_{n-i+1}\cup\cdots\cup\mathcal{A}_{n}),\mathcal{A}_{1}\cup\cdots\cup \mathcal{A}_{n-i-1})\cong \varprojlim \mathrm{Ext}^{1}_{A}(\mathcal{A}_{n-i+1}\cup\cdots\cup\mathcal{A}_{n},\mathcal{A}_{1}\cup\cdots\cup \mathcal{A}_{n-i-1})=0.$$
	Hence $\mathrm{Ext}^{1}_{A}(X,A^{\prime})=0$ and then $\mathrm{Ext}^{1}_{A}(\mathrm{Im}g,A^{\prime})=0$. Consequently $\mathrm{Hom}_{A}(K,A^{\prime})=0$, i.e. $K\in \mathrm{Gen}T_{i+1}$, as desired.
\end{proof}

\subsection{Examples}
We close this note with some examples. Let us first recall some generalizations of the notion of a tilted algebra appearing in the literature.  

\begin{definition}
	
	An algebra $A$ is called 
	\\
	$\bullet$ {\it quasi-tilted} \cite{HRS} if it satisfies
	\\
	$(i)$ $\mathrm{gl.dim}A\leq 2$; and \\
	$(ii)$ $\mathrm{ind}A=\mathcal{L}_{A}\cup \mathcal{R}_{A}$, or  equivalently, $\mathrm{pd}X\leq 1$ or  $\mathrm{id}X\leq 1$ for each  $X\in\mathrm{ind}A$, cf.~\cite{CL};  
	\\
	$\bullet$ {\it shod} \cite{CL} if it satisfies  condition $(ii)$ above;
	\\
	$\bullet$ {\it weakly shod} \cite{CL2} provided \\$(iii)$ $\mathcal{L}_{A}\cup \mathcal{R}_{A}$ is cofinite in $\mathrm{ind}A$, i.e. $\mathrm{ind}A\setminus (\mathcal{L}_{A}\cup \mathcal{R}_{A})$ has finitely many objects; and\\$(iv)$ no nonsemiregular component of the Auslander-Reiten quiver has oriented cycles;
\\
	$\bullet$ {\it laura} \cite{AC} if it satisfies  condition $(iii)$ above. 
\end{definition}

We don't know whether all 3-silted algebras are  laura. The following example exhibits a 3-silted algebra which is weakly shod, but not shod.

\begin{example}
	Consider the hereditary algebra $A=kQ$ with the quiver
	$$\xymatrix@=0,4cm{ &1\ar[rd]& & & \\
	Q:& &3\ar[r]&4\ar[r]&5 \\
	  &2\ar[ru]& & & \\}$$
	Let $P_{i}$ (resp., $I_{i}$) denotes the projective (resp., injective) $A$-module corresponding to vertex $i$. Consider the universal localisation of $A$ at $I_{3}\oplus I_{2}\oplus I_{1}$. By \cite[Theorem 6.1]{K}, it is a homological ring epimorphism, denoted by $\lambda_{0}:A\longrightarrow B_{0}:=A_{\left\{I_{3}\oplus I_{2}\oplus I_{1}\right\}}$, which corresponds to the bireflective subcategory $\mathcal{X}_{0}=(I_{3}\oplus I_{2}\oplus I_{1})^{\bot_{0,1}}=\mathrm{Add}(P_{5}\oplus I_{5}\oplus I_{4})$. Furthermore, $B_{0}={I_{5}}^{4}\oplus P_{5}$ and the minimal silting module corresponding to the homological ring epimorphism is equivalent to $T_{0}=I_{5}\oplus P_{5}\oplus I_{3}\oplus I_{2}\oplus I_{1}$. Similarly, consider another universal localisation of $A$ at $\begin{smallmatrix}
		1 \\ 3
	\end{smallmatrix}\oplus I_{1}$. Then the homological ring epimorphism $\lambda_{1}:A\longrightarrow B_{1}:=A_{\left\{\begin{smallmatrix}
		1 \\ 3
	\end{smallmatrix}\oplus I_{1}\right\}}$ corresponds to the bireflective subcategory $\mathcal{X}_{1}=(\begin{smallmatrix}
	1 \\ 3
    \end{smallmatrix}\oplus I_{1})^{\bot_{0,1}}=\mathrm{Add}(P_{5}\oplus P_{1}\oplus \begin{smallmatrix}
    1 \\ 3 \\ 4
    \end{smallmatrix}\oplus I_{5}\oplus I_{4}\oplus I_{2})$. It is easy to check that $B_{1}={P_{1}}^{3}\oplus I_{5}\oplus P_{5}$ and the minimal silting module is equivalent to $T_{1}=P_{1}\oplus I_{5}\oplus P_{5}\oplus I_{1}\oplus \begin{smallmatrix}
    1 \\ 3
    \end{smallmatrix}$. Clearly, the partial order $\mathcal{X}_{0}\subseteq \mathcal{X}_{1}$ corresponds, under the bijection in Theorem \ref{Th1}, to the partial order $\lambda_{0}\leq \lambda_{1}$ and there exists a ring epimorphism $\mu:B_{1}\longrightarrow B_{0}$ such that $\lambda_{0}=\mu \circ\lambda_{1}$. It is easy to verify that $\mathrm{Ker}\mu=0,\mathrm{Coker}\mu={I_{2}}^{3},\mathrm{Ker}\lambda_{1}=0,\mathrm{Coker}\lambda_{1}={I_{1}}^{2}\oplus \begin{smallmatrix}
    	1 \\ 3
    \end{smallmatrix}$. Then, by Proposition \ref{Pro}, we get the $3$-term silting complex $$T=B_{0}\oplus \mathrm{Ker}\mu[1]\oplus \mathrm{Coker}\mu\oplus \mathrm{Ker}\lambda_{1}[2]\oplus \mathrm{Coker}\lambda_{1}[1]\simeq P_{5}\oplus I_{5}\oplus I_{2}\oplus \begin{smallmatrix}
    1 \\ 3
    \end{smallmatrix}[1]\oplus I_{1}[1].$$
    It is easy to see that the endomorphism ring $B=\mathrm{End}_{D^{b}(\mathrm{mod}A)}T$ of $T$ is isomorphic to the algebra $kQ^{\prime}/I^{\prime}$ with $Q^{\prime}:\bullet\stackrel{\alpha}\longrightarrow\bullet\stackrel{\beta}\longrightarrow\bullet\stackrel{\gamma}\longrightarrow\bullet\stackrel{\eta}\longrightarrow \bullet$ and $I^{\prime}=\langle\alpha\beta,\beta\gamma,\gamma\eta\rangle$. Note that it is weakly shod, but it is not a shod algebra since the global dimension of $B$ is $4$.
\end{example}

We also include an example of a (shod) 3-silted algebra arising from universal localization at regular modules. 

\begin{example}
	Let $A=k \widetilde{D_{6}}$ be the path algebra of 
	$$\xymatrix@=0,4cm{ &1& & & &6\ar[ld]\\
		\widetilde{D_{6}}:& &3\ar[lu]\ar[ld]&4\ar[l]&5\ar[l] & \\
		&2& & & &7\ar[lu]\\}$$
	It is well known that the Auslander-Reiten quiver $\Gamma(\mathrm{mod}A)$ of $A$ is the union of the postprojective component $\mathcal{P}(A)$, the preinjective component $\mathcal{Q}(A)$ and the union $\mathcal{R}(A)$ of regular components.
	Let $P_{i}$ denotes the projective $A$-module corresponding to vertex $i$. Consider the stable tube of rank $4$ in $\mathcal{R}(A)$ containing the modules $F_{1}^{(1)}, \ldots , F_{4}^{(1)}$ and, for each $s\in \left\{1,\ldots,4\right\}$, there is an isomorphism $\tau_{A}F_{s+1}^{(1)}\cong F_{s}^{(1)}$, where we set $F_{5}^{(1)}=F_{1}^{(1)}$.
	
	$$\xymatrix@=0,4cm{ &0& & & &0\ar[ld]\\
		F_{1}^{(1)}:& &K\ar[lu]\ar[ld]&0\ar[l]&0\ar[l] & \\
		&0& & & &0\ar[lu]\\} \ \ \ \ \ \ \ \ \ \ 
		\xymatrix@=0,4cm{ &0& & & &0\ar[ld]\\
			F_{2}^{(1)}:& &0\ar[lu]\ar[ld]&K\ar[l]&0\ar[l] & \\
			&0& & & &0\ar[lu]\\}$$
			
	$$\xymatrix@=0,4cm{ &0& & & &0\ar[ld]\\
		F_{3}^{(1)}:& &0\ar[lu]\ar[ld]&0\ar[l]&K\ar[l] & \\
		&0& & & &0\ar[lu]\\} \ \ \ \ \ \ \ 
	\xymatrix@=0,4cm{ &K& & & &K\ar[ld]_{1}\\
		F_{4}^{(1)}:& &K\ar[lu]_{1}\ar[ld]_{1}&K\ar[l]_{1}&K\ar[l]_{1} & \\
		&K& & & &K\ar[lu]_{1}\\}$$
	Consider the universal localisation of $A$ at $F_{2}^{(1)}\oplus F_{3}^{(1)}$ and denote the homological ring epimorphism by $\lambda_{0}:A\longrightarrow B_{0}:=A_{F_{2}^{(1)}\oplus F_{3}^{(1)}}$, which corresponds to the bireflective subcategory $\mathcal{X}_{0}=(F_{2}^{(1)}\oplus F_{3}^{(1)})^{\bot_{0,1}}$. It is easy to see that $B_{0}=P_{1}\oplus P_{2}\oplus{P_{5}}^{3}\oplus P_{6}\oplus P_{7}$ and the minimal silting module corresponding to the homological ring epimorphism is equivalent to $T_{0}=P_{1}\oplus P_{2}\oplus P_{5}\oplus P_{6}\oplus P_{7}\oplus F_{2}^{(1)}\oplus F_{3}^{(1)}$. Similarly, the homological ring epimorphism $\lambda_{1}:A\longrightarrow B_{1}$ corresponds to universal localisation of $A$ at $F_{2}^{(1)}$, and then the bireflective subcategory is $\mathcal{X}_{0}=(F_{2}^{(1)})^{\bot_{0,1}}$. By easy calculation, we have $B_{1}=P_{1}\oplus P_{2}\oplus{P_{4}}^{2}\oplus P_{5}\oplus P_{6}\oplus P_{7}$ and the minimal silting module corresponding to $\lambda_{1}$ is equivalent to $T_{1}=P_{1}\oplus P_{2}\oplus P_{4}\oplus P_{5}\oplus P_{6}\oplus P_{7}\oplus F_{2}^{(1)}$.  Clearly, the partial order $\mathcal{X}_{0}\subseteq \mathcal{X}_{1}$ corresponds, under the bijection in Theorem \ref{Th1}, to the partial order $\lambda_{0}\leq \lambda_{1}$ and there exists a ring epimorphism $\mu:B_{1}\longrightarrow B_{0}$ such that $\lambda_{0}=\mu \circ\lambda_{1}$. It is easy to verify that $\mathrm{Ker}\mu=0,\mathrm{Coker}\mu=(F_{3}^{(1)})^{2},\mathrm{Ker}\lambda_{1}=0,\mathrm{Coker}\lambda_{1}=F_{2}^{(1)}$. Therefore, by Proposition \ref{Pro}, we get the $3$-term silting complex $$T=B_{0}\oplus \mathrm{Ker}\mu[1]\oplus \mathrm{Coker}\mu\oplus \mathrm{Ker}\lambda_{1}[2]\oplus \mathrm{Coker}\lambda_{1}[1]\simeq P_{1}\oplus P_{2}\oplus{P_{5}}\oplus P_{6}\oplus P_{7}\oplus F_{3}^{(1)}\oplus F_{2}^{(1)}[1].$$
	It is easy to check that the endomorphism ring $B=\mathrm{End}_{D^{b}(\mathrm{mod}A)}T$ of $T$ is isomorphic to the algebra $kQ^{\prime}/I^{\prime}$ with
    $$\xymatrix@R=0.6cm@C=0.8cm{ &\bullet\ar[rd]^{\alpha_{1}}& &\bullet& \\
    	Q^{\prime}:& &\bullet\ar[ru]^{\beta_{1}}\ar[rd]_{\beta_{2}}\ar[r]^{\beta}&\bullet\ar[r]^{\gamma}&\bullet \\
    	&\bullet\ar[ru]_{\alpha_{2}}& &\bullet& \\}$$   
    and $I^{\prime}=\langle\alpha_{1}\beta,\alpha_{2}\beta,\beta\gamma\rangle $. It is a shod algebra.
	
\end{example}

	




\vskip 10pt

\noindent{\bf{Acknowledgments}} 

\vskip 10pt

This work was started during a ICTP-INdAM Research in Pairs Programme with the visit of the second and last named authors at the University of Verona in 2019. 
The first and last named authors would like to thank the  Network on Silting Theory funded by the Deutsche Forschungsgemeinschaft and the University of Stuttgart for hospitality during a research visit in 2022 where  part of this work was carried out.
The first named author also acknowledges support from the project  \textit{SQUARE: Structures for Quivers, Algebras and Representations}, PRIN~2022S97PMY,  funded by the  Italian Ministry of University and Research. The third named author is supported by China Scholarship Council (Grant No. 202306860057). The fourth named author is supported by the project PICT 2021 01154 from ANPCyT, Argentina.



\bibliographystyle{abbrv}
\bibliography{ALLT}

\end{document}